\def\titlerunning#1{\gdef\titrun{#1}}
\def\author#1{\gdef\autrun{\def\and{\unskip, }#1}\gdef\@author{#1}}
\def\address#1{{\def\and{\\\hspace*{18pt}}\renewcommand{\thefootnote}{}%
\footnote {#1}}%
\markboth{\autrun}{\titrun}}
\def\email#1{e-mail: #1}
\def\subjclass#1{{\renewcommand{\thefootnote}{}%
\footnote{\emph{Mathematics Subject Classification (2010):} #1}}}
\def\keywords#1{\par\medskip
\noindent\textbf{Keywords.} #1}
\newtheorem{thm}{Theorem}[section]
\newtheorem{lem}[thm]{Lemma}
\theoremstyle{definition}
\newtheorem{dfn}[thm]{Definition}
\newtheorem{rem}[thm]{Remark}
\numberwithin{equation}{section}
\newcommand{\ap}[1]{\left\langle#1\right\rangle}
\def\R{\mathbb{R}}
\def\rr{\mathbb{R}}
\def\dR{\mathbb{R}}
\def\bZ{\mathbf{Z}}
\def\bb{\mathbf{b}}
\def\bB{\mathbf{B}}
\DeclareMathOperator{\law}{law}
\newcommand{\X}{{\overline{X}}}
\newcommand{\V}{{\overline{V}}}
\newcommand{\Z}{{\overline{Z}}}
\newcommand{\E}{{\mathbb{E}}}
\newcommand{\ee}{{\mathbb{E}}}
\renewcommand{\P}{{\mathcal{P}}}
\newcommand{\1}{{\mathbb{1}}}
\begin{document}

%%%%% To ease editing, add:

\baselineskip=17pt

%%%%%%%%%%%%%%%%

%% In the running head, give an abbreviation of the title.
\titlerunning{}

\title{Stochastic Mean-Field Limit: \\ Non-Lipschitz Forces \& Swarming}

\author{Fran\c cois~Bolley \and Jos\'e~A.~Ca\~nizo \and Jos\'e~A.~Carrillo}

\date{September 24, 2010}

\maketitle

\address{F. Bolley: Ceremade, UMR CNRS 7534, Universit\'e
Paris-Dauphine, Place du Mar\'echal De Lattre De Tassigny, F-75016
Paris, France; \email{bolley@ceremade.dauphine.fr} \and J.~A.
Ca\~nizo: Departament de Matem\`atiques, Universitat Aut\`onoma de
Barcelona, E-08193 Bellaterra, Spain; \email{canizo@mat.uab.es}
\and J.~A. Carrillo: Instituci\'o Catalana de Recerca i Estudis
Avan\c cats and Departament de Matem\`atiques, Universitat
Aut\`onoma de Barcelona, E-08193 Bellaterra, Spain;
\email{carrillo@mat.uab.es}}

\subjclass{Primary 82C22; Secondary 82C40, 35Q92}

%%%%%%%%

\begin{abstract}
  We consider general stochastic systems of interacting particles with
  noise which are relevant as models for the collective behavior of
  animals, and rigorously prove that in the mean-field limit the
  system is close to the solution of a kinetic PDE. Our aim is to
  include models widely studied in the literature such as the
  Cucker-Smale model, adding noise to the behavior of individuals. The
  difficulty, as compared to the classical case of globally Lipschitz
  potentials, is that in several models the interaction potential between particles is only
  locally Lipschitz, the local Lipschitz constant growing to infinity
  with the size of the region considered. With this in mind, we
  present an extension of the classical theory for globally Lipschitz
  interactions, which works for only locally Lipschitz ones.
%% Keywords are optional
\keywords{Mean-field limit, diffusion, Cucker-Smale, collective behavior}
\end{abstract}

\section{Introduction}

The formation of large-scale structures (patterns) without the need of
leadership (self-organization) is one of the most interesting and not
completely understood aspect in the collective behavior of certain
animals, such as birds, fish or insects.  This phenomena has attracted
lots of attention in the scientific community, see
\cite{camazine,couzin,parrish,yates-etal} and the references therein.

Most of the proposed models in the literature are based on
particle-like description of a set of large individuals; these
models are called Individual-Based Models (IBM). IBMs typically
include several interactions between individuals depending on the
species, the precise mechanism of interaction of the animals and
their particular biological environment. However, most of these
IBMs include at least three basic effects: a short-range
repulsion, a long-range attraction and a ``mimicking'' behavior
for individuals encountered in certain spatial regions. This
so-called three-zone model was first used for describing fish
schools in \cite{Ao,HW} becoming a cornerstone of swarming
modelling, see \cite{BTTYB,HH}.

The behavior of a large system of individuals can be studied through
mesoscopic descriptions of the system based on the evolution of the
probability density of finding individuals in phase space. These
descriptions are usually expressed in terms of space-inhomogeneous kinetic
PDEs and the scaling limit of the interacting particle system to
analyze is usually called the \emph{mean-field limit}. These kinetic
equations are useful in bridging the gap between a microscopic
description in terms of IBMs and macroscopic or hydrodynamic
descriptions for the particle probability density. We refer to the
review \cite{cftv10} for the different connections between these
models and for a larger set of references.

The mean-field limit of deterministic interacting particle systems
is a classical question in kinetic theory, and was treated in
\cite{BH,dobru,neunzert2} in the case of the Vlasov equation. In
these papers, the particle pairwise interaction is given by a
globally bounded Lipschitz force field. Some of the recent models
of swarming introduced in \cite{DCBC,CS2,HT08} do not belong to
this class due to their growth at infinity leading to an
interaction kernel which is only locally Lipschitz. These IBMs are
kinetic models in essence since the interactions between
individuals are at the level of the velocity variable to ``align''
their movements for instance or to impose a limiting ``cruising
speed''. The mean-field limit for deterministic particle systems
for some models of collective behavior with locally Lipschitz
interactions was recently analysed in \cite{ccr} showing that they
follow the expected Vlasov-like kinetic equations.

On the other hand, noise at the level of the IBMs is an important
issue since we cannot expect animals to react in a completely
deterministic way. Therefore, including noise in these IBMs and thus,
at the level of the kinetic equation is an important modelling
ingredient. This stochastic mean-field limit formally leads to kinetic
Fokker-Planck like equations for second order models as already
pointed out in \cite{CDP}. The rigorous proof of this stochastic
mean-field limit has been carried out for globally Lipschitz
interactions in \cite{Szn91,Mel96}, see also~\cite{mckean}.

This work is devoted to the rigorous analysis of the stochastic
mean-field limit of locally Lipschitz interactions that include
relevant swarming models in the literature such as those in
\cite{DCBC,CS2}. We will be concerned with searching the rate of
convergence, as the number of particles $N\to\infty$, of the
distribution of each of the particles and of the empirical measure
of the particle system to the solution of the kinetic equation.
This convergence  will also establish the propagation of chaos as
$N\to\infty$ for the particle system and will be measured in terms
of distances between probability measures. Here, we will not deal
with uniform in time estimates since no stabilizing behaviour can
be expected in this generality, such estimates were obtained only
in a specific instance of Vlasov-Fokker-Planck equation,
see~\cite{BGM10}. The main price to pay to include possible growth
at infinity of the Lipschitz constants of the interaction fields
will be at the level of moment control estimates. Then, there will
be a trade-off between the requirements on the interaction and the
decay at infinity of the laws of the processes at the initial
time.

The work is organized as follows: in the next two subsections we will
make a precise descriptions of the main results of this work, given in
Theorems \ref{thm:main} and \ref{thm:mainexist} below, together with a
small overview of preliminary classical well-known facts and a list of
examples, variants and particular cases of applications in swarming
models. The second section includes the proof of the stochastic
mean-field limit of locally Lipschitz interacting particle systems
under certain moment control assumptions (thus proving Theorem
\ref{thm:main}). Finally, the third section will be devoted to the
proof of Theorem \ref{thm:mainexist}: a result of existence and
uniqueness of the nonlinear partial differential equation and its
associated nonlinear stochastic differential equation, for which the
stochastic mean-field limit result can be applied. The argument will
be performed in the natural space of probability measures by an
extension to our diffusion setting of classical characteristics
arguments for transport equations.

\subsection{Main results} \label{sec:main}

We will start by introducing the two instances of IBMs that
triggered this research. The IBM proposed in \cite{DCBC} includes
an effective pairwise potential $U:\R^d\longrightarrow \R$
modeling the short-range repulsion and long-range attraction. The
only ``mimicking'' interaction in this model is encoded in a
relaxation term for the velocity arising as the equilibrium speed
from the competing effects of self-propulsion and friction of the
individuals. We will refer to it as the D'Orsogna et al model in
the rest. More precisely, this IBM for $N$-particles in the
mean-field limit scaling reads as:
\begin{equation*}
  \left\lbrace
    \begin{array}{ll}
      \displaystyle \frac{dX^i}{dt} = V^i,
      &
      \vspace{.3cm}
      \\
      \displaystyle \frac{dV^i}{dt} = (\alpha - \beta \,|V^i|^2) V^i
      - \frac1N \sum_{j \neq i } \nabla U (|X^i - X^j|),
      &
    \end{array}
  \right.
\end{equation*}
where $\alpha>0$ measures the self-propulsion strength of individuals,
whereas the term corresponding to $\beta>0$ is the friction assumed to
follow Rayleigh's law. A typical choice for $U$ is a smooth radial
potential given by
$$
U(x) = -C_A e^{-|x|^2 /\ell_A^2} + C_R e^{-|x|^2 /\ell_R^2}.
$$
where $C_A, C_R$ and $\ell_A, \ell_R$ are the strengths and the
typical lengths of attraction and repulsion, respectively.

The other motivating example introduced in \cite{CS2} only
includes an ``alignment'' or reorientation interaction effect and we
will refer to it as the Cucker-Smale model. Each individual in the
group adjust their relative velocity by averaging with all the
others. This averaging is weighted in such a way that closer
individuals have more influence than further ones. For a system
with $N$ individuals the Cucker-Smale model in the mean-field
scaling reads as
$$
\left\{\begin{array}{lr} \displaystyle\frac{dX^i}{dt} = V^i, \\[3mm]
\displaystyle\frac{dV_i}{dt} = \frac1N \displaystyle\sum_{j=1}^{N}
w_{ij} \left(V^j - V^i\right) ,
\end{array}\right.
$$
with the \emph{communication rate} matrix given by:
$$
w_{ij} = w(|X^i-X^j|)= \frac 1 {\left(1 +|X^i - X^j|^2
  \right)^\gamma}
$$
for some $\gamma \geq 0$. We refer to \cite{CS2,HT08,cfrt09,cftv10}
and references therein for further discussion about this model and
qualitative properties. Let us remark that both can be considered
particular instances of a general IBM of the form
\begin{equation}
\label{eq:odesys}
\begin{cases}
  \displaystyle\frac{dX^i}{dt} = V^i \\
\displaystyle \frac{dV^i}{dt} = - F(X^i, V^i) - \frac{1}{N} \sum_{j=1}^N H(X^i - X^j, V^i-V^j) dt, \qquad 1 \leq i \leq N\\
\end{cases}
\end{equation}
where $F,H:\R^{2d}\longrightarrow \R$ are suitable functions: the
D'Orsogna et al model with $F(x,v)=(\beta \vert v \vert^2 -
\alpha) v$ and $H(x,v) = \nabla_x U(x)$ and the Cucker-Smale model
with $F=0$ and $H(x,v)=w(x)v$. Let us emphasize that $F$ in the
D'Orsogna et al model and $H$ in the Cucker-Smale model are not
globally Lipschitz functions in $\R^{2d}$.

Our aim is to deal with a general system of interacting particles
of the type \eqref{eq:odesys} with added noise and suitable
hypotheses on $F$ and $H$ including our motivating examples. More
precisely, we will work then with a general large system of $N$
interacting $\rr^{2d}$-valued processes $(X^{i}_t, V^{i}_t)_{t\geq
0}$ with $1 \leq i \leq N$ solution of
\begin{equation}
\label{eq:sdesys}
\begin{cases}
  d X_t^i = V_t^i dt, \\
\displaystyle{dV_t^i = \sqrt{2} dB_t^i - F(X_t^i, V_t^i) dt - \frac{1}{N} \sum_{j=1}^N H(X_t^i - X_t^j, V_t^i-V_t^j) dt,} \\
\end{cases}
\end{equation}
with independent and commonly distributed initial data $(X^{i}_0,
V^{i}_0)$ with $1 \leq i \leq N$. Here, and throughout this paper, the
$(B^i_t)_{t \geq 0}$ are $N$ independent standard Brownian motions in
$\rr^d$. More general diffusion coefficients will be considered in the
next subsection. The asymptotic behavior of the Cucker-Smale system
with added noise has been recently considered in \cite{CM}, and
eq. (\ref{eq:sdesys}) includes as a particular case the
continuous-time models discussed there. Our main objective will be to
study the large-particle number limit in their mean-field limit
scaling. It is sometimes usual to write $(X^{i,N}_t,V^{i,N}_t)$ to
track $N$ individuals, but to avoid a cumbersome notation we will drop
the superscript $N$ unless the dependence on it needs to be
emphasized.

By symmetry of the initial configuration and of the evolution, all
particles have the same distribution on $\rr^{2d}$ at time $t$,
which will be denoted $f_t^{(1)}$. For any given $t>0$ the
particles get correlated due to the nonlocal term
$$
- \frac{1}{N} \sum_{j=1}^N H(X_t^i - X_t^j,
V_t^i-V_t^j)
$$
in the evolution, though they are independent at initial time.
But, since the pairwise action
of two particles $i$ and $j$ is of order $1/N$,
it seems reasonable that two of these interacting particles (or a
fixed number $k$ of them) become less and less correlated as $N$
gets large: this is what is called propagation of chaos. The
statistical quantities of the system are given by the empirical
measure
$$
\hat{f}_t^N = \frac{1}{N} \sum_{i=1}^N \delta_{(X^{i}_t,V^{i}_t)}.
$$
It is a general fact, see Sznitman \cite{Szn91}, that propagation of
chaos for a symmetric system of interacting particles is equivalent to
the convergence in $N$ of their empirical measure.  Following
\cite{Szn91} we shall prove quantitative versions of these equivalent
results.

We shall show that our $N$ interacting processes
$({X}^i_t,{V}^i_t)_{t \geq 0}$ respectively behave as $N\to\infty$
like the processes $(\X^i_t,\V^i_t)_{t \geq 0}$, solutions of the
kinetic McKean-Vlasov type equation on $\dR^{2d}$
\begin{equation}
  \label{eq:nlSDE}
  \left\{
    \begin{split}
      &d\X_t^i = \V_t^i \,dt
      \\
      &d\V_t^i = \sqrt{2}\, dB_t^i- F(\X_t^i, \V_t^i)dt - H \ast f_t
      (\X_t^i,\V_t^i)dt,
      \\
      &(\X_0^i,\V_0^i)=(X_0^i,V_0^i), \quad f_t = \law(\X_t^i, \V_t^i).
    \end{split}
  \right.
\end{equation}
Here the Brownian motions $(B^i_t)_{t \geq 0}$ are those governing the
evolution of the $(X^i_t,V^i_t)_{t \geq 0}$. Note that the above set
of equations involves the condition that $f_t$ is the distribution of
$(\X^i_t,\V^i_t)$, thus making it nonlinear. The processes
${(\X^i_t,\V^i_t)_{t \geq 0}}$ with $i\geq 1$ are independent since
the initial conditions and driving Brownian motions are
independent. Moreover they are identically distributed and, by the
It\^o formula, their common law $f_t$ at time $t$ should evolve
according to
\begin{equation}\label{eq:pde}
  \partial_t f_t + v  \cdot \nabla_x f_t  = \Delta_v f_t + \nabla_v \cdot ((F + H*f_t) f_t), \quad t>0, x,v \in \rr^d.
\end{equation}
Here $a \cdot b$ denotes the scalar product of two vectors $a$ and $b$
in $\rr^{d}$ and $*$ stands for the convolution with respect to $(x,v)
\in \rr^{2d}$:
$$
H* f (x) = \int_{\rr^{2d}} H(x-y, v-w) \, f(y,w) \, dy \, dw.
$$
Moreover, $\nabla_x$ stands for the gradient with respect to the
position variable $x \in \rr^d$ whereas $\nabla_v$, $\nabla_v \cdot$
and $\Delta_v$ respectively stand for the gradient, divergence and
Laplace operators with respect to the velocity variable $v \in
\rr^{d}$.

Assuming the well-posedness of the stochastic differential
system \eqref{eq:sdesys} and of the nonlinear equation \eqref{eq:nlSDE}
together with some uniform moment bounds, we will
obtain our main result on the stochastic mean-field limit. Existence
and uniqueness of solutions to~\eqref{eq:sdesys},~\eqref{eq:nlSDE}
and~\eqref{eq:pde} verifying the assumptions of the theorem will also
be studied but with more restrictive assumptions on $F$ and $H$ that
we will comment on below.

\begin{thm}\label{thm:main}
  Let $f_0$ be a Borel probability measure and $(X^i_0, V^i_0)$ for $1
  \leq i \leq N$ be $N$ independent variables with law $f_0$.  Assume
  that the drift $F$ and the antisymmetric kernel $H$, with
  $H(-x,-v)=-H(x,v)$, satisfy that there exist constants
  $A, L, p > 0$ such that
  \begin{align}
    \label{hypaf1}
    -(v - w) \cdot (F(x,v) - F(x,w)) &\leq A \,\vert v- w \vert^2
    \\
    \label{hypaf2}
    \vert F(x,v) - F(y,v) \vert &\leq L \min\{\vert x-y \vert, 1 \}
    (1 + \vert v \vert^p)
  \end{align}
  for all $x,y,v,w$ in $\rr^d,$ and analogously for $H$ instead of
  $F$. Take $T > 0$. Furthermore, assume that the particle
  system~\eqref{eq:sdesys} and the processes~\eqref{eq:nlSDE} have
  global 
  solutions on $[0,T]$ with initial data
  $(X^i_0, V^i_0)$ such that
  \begin{equation}\label{hyp:unifmoment}
    \sup_{0 \leq t \leq T} \!\!\Big\{ \!\int_{\rr^{4d}} \!\!\!\vert H(x-y, v-w) \vert^2 df_t(x,v)  df_t(y,w)
    + \!  \int_{\rr^{2d}} \!\!(|x|^2 + e^{a|v|^p}) df_t(x,v) \Big\} < +\infty,
  \end{equation}
  with $f_t=law(\X^i_t,\V^i_t)$. Then there exists a constant $C>0$
  such that
  \begin{equation}\label{eq:main1}
    \E\big[|X_t^i - \X_t^i|^2 + |V_t^i - \V_t^i|^2\big]
    \leq
    \frac{C}{N^{e^{-Ct}}}
  \end{equation}
  for all $0 \leq t \leq T$ and $N \geq 1$.

  Moreover, if additionally there exists $p'>p$ such that
  \begin{equation}\label{hyp:unifmoment2}
    \sup_{0 \leq t \leq T}   \int_{\rr^{2d}} e^{a|v|^{p'}} df_t(x,v) < +\infty,
  \end{equation}
  then for all $0 < \epsilon < 1$ there exists a constant $C$ such
  that
  \begin{equation}\label{eq:main2}
    \E\big[|X_t^i - \X_t^i|^2 + |V_t^i - \V_t^i|^2\big]
    \leq
    \frac{C}{N^{1-\epsilon}}
  \end{equation}
  for all $0 \leq t \leq T$ and $N \geq 1$.
\end{thm}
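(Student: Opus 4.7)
Following Sznitman's classical coupling, I construct both systems on the same probability space, driven by the same Brownian motions $(B^i_t)$ and starting from the same initial data $(X_0^i, V_0^i)$, so that the independent solutions $(\X^i, \V^i)$ of~(\ref{eq:nlSDE}) are coupled to the interacting particles $(X^i, V^i)$ of~(\ref{eq:sdesys}). By exchangeability in $i$, it suffices to control
\begin{equation*}
u(t) := \E\big[|X_t^i-\X_t^i|^2 + |V_t^i-\V_t^i|^2\big].
\end{equation*}

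\textbf{It\^o expansion and decomposition.} Since the noises cancel, differentiating $u$ produces a trivial coupling contribution $\leq u$, a drift difference $-2\,\E[(V-\V)\cdot(F(X,V)-F(\X,\V))]$, and an interaction difference $-2\,\E[(V-\V)\cdot \Delta H]$ with $\Delta H := \tfrac1N\sum_j H(X^i-X^j,V^i-V^j) - H\ast f_t(\X^i,\V^i)$. Adding and subtracting $F(\X^i,V^i)$, the $v$-variation is absorbed via~(\ref{hypaf1}) into $A|V-\V|^2$, while the $x$-variation is bounded via~(\ref{hypaf2}) by $L\min\{|X-\X|,1\}(1+|V|^p)|V-\V|$. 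For $\Delta H$ I split off the coupling piece
\begin{equation*}
\frac{1}{N}\sum_j\big[H(X^i-X^j,V^i-V^j)-H(\X^i-\X^j,\V^i-\V^j)\big],
\end{equation*}
which is estimated analogously via the local Lipschitz hypotheses on $H$ together with exchangeability in $j$, and the fluctuation piece
\begin{equation*}
\frac{1}{N}\sum_j H(\X^i-\X^j,\V^i-\V^j)-H\ast f_t(\X^i,\V^i),
\end{equation*}
which, conditionally on $(\X^i,\V^i)$, is a centered average of i.i.d.\ terms (antisymmetry forces $H(0,0)=0$, so the $j=i$ discrepancy is harmless). The second-moment bound in~(\ref{hyp:unifmoment}) gives its $L^2$ norm as $O(N^{-1/2})$, contributing $C/N$ to $u'(t)$.

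\textbf{Main obstacle: Osgood closure.} The genuine difficulty is the prefactor $1+|V|^p$ produced by the local Lipschitz bounds: a naive Cauchy--Schwarz requires $\E[|V|^{2p}|X-\X|^2]$, which is not expressible through $u$ alone. I would therefore truncate at a parameter $R>0$, splitting each expectation according to $\{|V^i|\leq R\}$ vs.\ $\{|V^i|>R\}$. On the first event the prefactor is at most $1+R^p$, producing $C(1+R^{2p})\,u(t)$; on the complement, using $\min\{|X-\X|,1\}\leq 1$ together with Cauchy--Schwarz and the exponential moment in~(\ref{hyp:unifmoment}) yields a tail of order $e^{-aR^p/2}$ via Chebyshev. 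Balancing by choosing $R^p\sim C^{-1}\log(1/u(t))$ produces the Osgood-type inequality
\begin{equation*}
u'(t) \leq C\,u(t) + C\,u(t)\log\!\big(1/u(t)\big) + \frac{C}{N}.
\end{equation*}
Since $u(0)=0$, the substitution $w(t)=\log\big(1/(u(t)+1/N)\big)$ linearizes this into $w'\geq -C\,w - C$, which integrates to the advertised rate $u(t)\leq CN^{-e^{-Ct}}$, proving~(\ref{eq:main1}).

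\textbf{Improved rate.} Under~(\ref{hyp:unifmoment2}) with $p'>p$, the tail contribution decays like $e^{-aR^{p'}}$ while the on-truncation cost is still $R^{2p}$. Rather than optimizing $R$ in terms of $u(t)$, I now fix $R$ as a deterministic function of $N$, namely $R^{p'}\sim \tfrac{1+\eta}{a}\log N$, so that the tail is dominated by $1/N$. Then $R^{2p}$ is at most a power of $\log N$, hence bounded by $N^{\epsilon}$ for any fixed $\epsilon>0$ and $N$ large, and the differential inequality becomes genuinely linear, $u'(t)\leq (C+CN^{\epsilon})\,u(t) + C/N$. A plain Gr\"onwall estimate on $[0,T]$ then yields $u(t)\leq CN^{-(1-\epsilon)}$, which is~(\ref{eq:main2}).
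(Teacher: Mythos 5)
Your overall strategy mirrors the paper's: the Sznitman synchronous coupling, the decomposition into drift-difference, coupling piece and LLN fluctuation, the truncation at level $R$ to trade the non-Lipschitz prefactor for an exponential tail, and the resulting Osgood-type closure. Your substitution $w=\log(1/(u+1/N))$ to integrate the inequality $u'\leq -Cu\log u + C/N$ is in fact a slightly cleaner route than the paper's rescaling $v=uN^{a(t)}$ with $a(t)=e^{-t}$, and does yield the rate $N^{-e^{-Ct}}$. However, there are two genuine problems.

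\emph{Choice of the intermediate term.} You add and subtract $F(\X^i,V^i)$, so that the $x$-variation $F(X^i,V^i)-F(\X^i,V^i)$ carries, via \eqref{hypaf2}, a prefactor $1+|V^i|^p$ involving the \emph{particle} velocity $V^i$. Your truncation then needs a uniform (in $N$ and $t$) exponential moment of $V^i$, which is \emph{not} part of the hypotheses: \eqref{hyp:unifmoment} controls only $f_t=\mathrm{law}(\X^i_t,\V^i_t)$, the law of the nonlinear process. The paper avoids this by inserting $F(X^i,\V^i)$ instead, so that the $x$-variation $F(X^i,\V^i)-F(\X^i,\V^i)$ carries the controlled prefactor $1+|\V^i|^p$, while the $v$-variation $F(X^i,V^i)-F(X^i,\V^i)$ is handled by \eqref{hypaf1} (with the same first argument). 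The same care is needed in $I_{21}$ for $H$, and there you must also invoke the antisymmetry of $H$ together with exchangeability to symmetrize the sum in $(i,j)$ before applying \eqref{hypaf1}; ``exchangeability in $j$'' alone is not enough for the one-sided bound. Also, the on-truncation prefactor should be $1+R^p$ (not $1+R^{2p}$): Young's inequality on $\min\{|x^i|,1\}|v^i|$ carries $1+R^p$ straight into the coefficient of $u$.

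\emph{The improved rate: the Gr\"onwall step is wrong.} You fix $R^{p'}\sim\log N$ (correct), so the coefficient of $u$ in the differential inequality is of order $(\log N)^{p/p'}$. You then replace this coefficient by $N^\epsilon$ \emph{before} applying Gr\"onwall. But Gr\"onwall puts the coefficient in the exponent: with coefficient $CN^\epsilon$ you get a factor $e^{CN^\epsilon T}$, which is super-polynomial in $N$ and destroys the bound entirely. The correct order of operations is to integrate with the poly-logarithmic coefficient intact, obtaining
\[
u(t)\leq \frac{C}{N}\,\exp\!\big(CT(\log N)^{p/p'}\big),
\]
and only then observe that, since $p/p'<1$, the factor $\exp(CT(\log N)^{p/p'})=N^{CT(\log N)^{p/p'-1}}$ is $N^{o(1)}$, hence $\leq C_\epsilon N^{\epsilon}$ for every fixed $\epsilon>0$. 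Your proof of \eqref{eq:main2} does not go through as written; you must keep the $(\log N)^{p/p'}$ through the Gr\"onwall step and absorb it into $N^\epsilon$ only afterwards.
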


This result classically ensures quantitative estimates on the mean
field limit and the propagation of chaos. First of all, it ensures
that the common law $f_t^{(1)}$ of any (by exchangeability) of the
particles $X^i_t$ at time $t$ converges to $f_t$ as $N$ goes to
infinity, as we have
\begin{equation}
  \label{prelim1}
  W_2^2 (f^{(1)}_t, f_t ) \leq \ee \big[ \vert X^i_t - \X^i_t
  \vert^2 + \vert V^i_t - \V^i_t \vert^2 \big] \leq \varepsilon(N)
\end{equation}
Here $W_2$ stands for the Wasserstein distance between two
measures $\mu$ and $\nu$ in the set $\mathcal P_2(\rr^{2d})$ of
Borel probability measures on $\mathbb R^{2d}$ with finite moment
of order $2$ defined by
$$
W_2 (\mu, \nu ) = \inf_{(Z,\Z)} \left\{ \ee\left[ \vert Z - \Z
    \vert^2 \right]\right\}^{1/2},
$$
where the infimum runs over all couples of random variables $(Z,\Z)$ in $\rr^{2d} \times \rr^{2d}$
with $Z$ having law $\mu$ and $\Z$ having law $\nu$ (see
\cite{villani-stflour} for instance). Moreover $\varepsilon(N)$ denotes the quantity in the right hand side of
\eqref{eq:main1} or \eqref{eq:main2}, depending on which part of
Theorem \ref{thm:main} we are using.

Moreover, it proves a quantitative version of propagation of chaos:
for all fixed $k$, the law $f^{(k)}_t$ of any (by exchangeability) $k$
particles $(X^i_t, V^i_t)$ converges to the tensor product
$f_t^{\otimes k}$ as $N$ goes to infinity, according to
\begin{align*}
W_2^2 (f^{(k)}_t , f_t^{\otimes k})
 \leq&\,
 \ee \left[\big\vert (X^1_t, V^1_t, \cdots, X^k_t, V^k_t) - (\X^1_t, \V^1_t, \cdots, \X^k_t, \V^k_t)
 \big\vert^2\right]\\
 \leq&\,
 k \,\ee \left[ \vert X^1_t - \X^1_t \vert^2 + \vert V^1_t - \V^1_t \vert^2 \right]
 \leq
 k \varepsilon(N).
\end{align*}

It finally gives the following quantitative result on the
convergence of the empirical measure $\hat{f}^N_t$ of the particle
system to the distribution $f_t$ : if $\varphi$ is a Lipschitz map
on $\rr^{2d}$, then
\begin{align*}
\ee &\left[\Big\vert \frac{1}{N} \sum_{i=1}^N  \varphi (X^i_t,
V^i_t) - \int_{\rr^{2d}} \varphi \, df_t \Big\vert^2\right] \\
&\leq 2 \, \ee \left[\vert \varphi(X^i_t, V^i_t) - \varphi(\X_t^i,
\V_t^i) \vert^2 +  \,\Big\vert \frac{1}{N} \sum_{i=1}^N
\varphi(\X^i_t, \V_t^i) - \int_{\rr^{2d}} \varphi \, df_t
\Big\vert^2 \right] \leq \varepsilon(N) + \frac{C}{N}
\end{align*}
by Theorem~\ref{thm:main} and argument on the independent variables
$(\X^i_t, \V^i_t)$ based on the law of large numbers,
see~\cite{Szn91}.

The argument of Theorem \ref{thm:main} is classical for globally
Lipschitz drifts \cite{Szn91,Mel96}. For space-homogeneous kinetic
models it was extended to non-Lipschitz drifts by means of convexity
arguments, first in one dimension in~\cite{BRTV98}, then more
generally in any dimension in~\cite{CGM08,Mal01}. Here, in our space
inhomogeneous setting, sole convexity arguments are hopeless, and we
will replace them by moment arguments using hypothesis
\eqref{hyp:unifmoment}. We also refer to \cite{mco05,BuCM} for related
problems and biological discussions in space-homogeneous kinetic
models with globally Lipschitz drifts but nonlinear diffusions.

Our proof will be written for $p>0$, but one can simplify it with
$p=0$, by only assuming finite moments of order $2$ in position and
velocity. In this case our proof is the classical Sznitman's
proof for existence, uniqueness, and mean-field limit for globally
Lipschitz drifts, written in our kinetic setting and giving the classical
decay rate in \eqref{eq:main1} as $1/N$, compared to
\eqref{eq:main1}-\eqref{eq:main2}. We will discuss further examples
related to swarming models and extensions in subsection~\ref{sec:ex}.

Section~\ref{sec:existence} will be devoted to the proof of
existence, uniqueness, and moment propagation properties
\eqref{hyp:unifmoment} and \eqref{hyp:unifmoment2} for solutions
to~\eqref{eq:sdesys},~\eqref{eq:nlSDE} and~\eqref{eq:pde}. This
well-posedness results and moment control for solutions will be
obtained under more restrictive assumptions that those used in the
proof of Theorem \ref{thm:main}.

\begin{thm}\label{thm:mainexist}
  Assume that the drift $F$ and the kernel $H$ are locally Lipschitz
  functions satisfying that there exist $C,L \geq 0$ and $0 < p \leq
  2$ such that
  \begin{align}
    -v \cdot F(x,v) &\leq C (1 + |v|^2)
    \label{eq:F-one-sided-growth} \\
    - (v - w) \cdot (F(x,v) - F(x,w))
    &\leq
    L | v- w |^2 (1 + | v |^p + \vert w \vert^p),
    \label{eq:F-loc-Lipschitz-v}\\
    | F(x,v) - F(y,v) |
    &\leq
    L | x-y | (1 + | v |^p ),
    \label{eq:F-loc-Lipschitz-x}\\
    |H(x,v)| &\leq C (1 + |v|),
    \label{eq:H-two-sided-sys}\\
    |H(x,v) - H(y,w)| &\leq
    L (|x-y| + |v-w|)(1+|v|^p + |w|^p),
    \label{eq:H-loc-Lipschitz}
  \end{align}
  for all $x,v,y,w \in \R^d$. Let $f_0$ be a Borel probability measure
  on $\rr^{2d}$ such that
  $$
  \int_{\rr^{2d}} \big( \vert x \vert^2 + e^{a \vert v \vert^{p'}}
  \big) \, df_0(x,v) \, < + \infty.
  $$
  for some $p' \geq p$. Finally, let $(X^i_0, V^i_0)$ for $1 \leq i
  \leq N$ be $N$ independent variables with law $f_0$. Then,
  \begin{enumerate}
  \item[i)] There exists a pathwise unique global solution to the
    SDE~\eqref{eq:sdesys} with initial data $(X^i_0, V^i_0)$.
  \item[ii)] There exists a pathwise unique global solution to the
    nonlinear SDE~\eqref{eq:nlSDE} with initial datum $(X^i_0, V^i_0)$.
  \item[iii)] There exists a unique global solution to the nonlinear PDE
    \eqref{eq:pde} with initial datum$f_0$.
  \end{enumerate}
  Moreover, for all $T>0$ there exists $b>0$ such that
  $$
  \sup_{0 \leq t \leq T} \int_{\rr^{2d}} \big( \vert x \vert^2 +
  e^{b \vert v \vert^{p'}} \big) \, df_t(x,v) < + \infty.
  $$
\end{thm}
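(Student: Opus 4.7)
The natural route is to prove (i), (ii), (iii) in succession, checking the exponential moment propagation along the way. The local Lipschitz hypotheses yield local-in-time SDE solutions; the substantive work is to prevent explosion via a priori moment bounds and, for the nonlinear equations, to close a Picard fixed point in a Wasserstein metric on path-space probability measures.

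\textbf{Particle system (i).} Standard SDE theory applied to the finite-dimensional system \eqref{eq:sdesys}, whose coefficients are locally Lipschitz on $\rr^{2dN}$, gives pathwise local existence and uniqueness up to an explosion time $\tau_N$. To show $\tau_N=+\infty$ almost surely, I would apply It\^o's formula to $\frac1N\sum_{i=1}^N (|X^i_t|^2 + |V^i_t|^2)$: \eqref{eq:F-one-sided-growth} absorbs the drift in $F$, and the linear growth \eqref{eq:H-two-sided-sys} combined with Cauchy--Schwarz controls the interaction, yielding a Gronwall bound that forbids blow-up. Applying It\^o to $\frac1N\sum_i e^{b|V^i_t|^{p'}}$ with $b\leq a$ sufficiently small (and shrinking with $T$ when $p'>2$, since then $\Delta e^{b|v|^{p'}}$ outgrows $e^{b|v|^{p'}}$) propagates the required exponential velocity moment of the joint law.

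\textbf{Nonlinear SDE (ii) and PDE (iii).} I would set up a Picard iteration on $\P_2(C([0,T_0];\rr^{2d}))$ equipped with the Wasserstein distance induced by the supremum-in-time norm. Given a path-law $\mu$ with time marginals $\mu_s$, the map $(x,v)\mapsto H\ast\mu_s(x,v)$ is a deterministic time-dependent field that inherits the structure \eqref{eq:H-two-sided-sys}--\eqref{eq:H-loc-Lipschitz} weighted by the second moments of $\mu_s$. Solving the associated linear SDE exactly as in (i) produces a pathwise unique solution whose law I denote $\Phi(\mu)$, with the same exponential moment bound. To show $\Phi$ is a contraction I would couple two iterates using the same Brownian motion and apply It\^o to $|\X^1_t-\X^2_t|^2 + |\V^1_t-\V^2_t|^2$: the $F$-cross-term is controlled by \eqref{eq:F-loc-Lipschitz-v}--\eqref{eq:F-loc-Lipschitz-x}, and the $H$-cross-term splits as
\[
H\ast\mu^1_s(\X^1_s,\V^1_s) - H\ast\mu^2_s(\X^2_s,\V^2_s) = \big[H\ast\mu^1_s(\X^1_s,\V^1_s) - H\ast\mu^1_s(\X^2_s,\V^2_s)\big] + \big[(H\ast\mu^1_s - H\ast\mu^2_s)(\X^2_s,\V^2_s)\big],
\]
the first summand being handled by \eqref{eq:H-loc-Lipschitz} and the second by the very definition of the Wasserstein distance between $\mu^1_s$ and $\mu^2_s$. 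The polynomial weights $1+|v|^p+|w|^p$ are integrated against the propagated exponential moments via Young's inequality, so Gronwall yields a strict contraction on a short interval $[0,T_0]$; iterating finitely many times covers $[0,T]$ and gives (ii). It\^o's formula on smooth test functions then shows $f_t=\law(\X_t,\V_t)$ is a weak solution of \eqref{eq:pde}, so existence in (iii) follows; for uniqueness I would use a superposition principle to realize any competing weak solution $g_t$ as the marginal law of the linear SDE driven by $F+H\ast g_t$ and conclude by the uniqueness obtained in (ii).

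\textbf{Main obstacle.} The delicate step is closing the contraction in (ii): the polynomial-in-$|v|$ factors in \eqref{eq:F-loc-Lipschitz-v}--\eqref{eq:H-loc-Lipschitz} produce Gronwall constants that are finite only thanks to the exponential velocity moment. One must therefore ensure that this moment is propagated \emph{uniformly along the Picard iteration}, which is exactly what the dissipative bound \eqref{eq:F-one-sided-growth} and the linear growth \eqref{eq:H-two-sided-sys} --- both independent of the local Lipschitz exponent $p$ --- are designed to provide. The restriction $p\leq 2$ and the possible shrinkage of $b$ in the final moment estimate reflect the competition between the polynomial drift growth and the Laplacian generated by the Brownian noise.
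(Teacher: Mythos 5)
Your overall architecture is the same as the paper's (Step (i) via a Lyapunov/Khasminskii condition; Steps (ii)–(iii) via a linear SDE solved against frozen laws, an iterative scheme, and a Gronwall-type estimate to close the fixed point; exponential moment propagation via an Itô/derivative-in-time computation), and the broad strokes are correct. But there is one genuine gap in the heart of the argument, and one detour that should at least be flagged.

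\textbf{The gap: no ``strict contraction'' is available.} You write that the polynomial weights $1+|v|^p+|w|^p$ ``are integrated against the propagated exponential moments via Young's inequality, so Gronwall yields a strict contraction.'' This does not go through as stated. Concretely, when you couple two iterates and compute $\tfrac{d}{dt}\gamma^n(t)$ with $\gamma^n(t)=\E\bigl[|Z^{n+1}_t-Z^n_t|^2\bigr]$, you meet terms of the shape $\E\bigl[|v^n|^2\,|V^n|^p\bigr]$. There is no inequality that bounds this by $C\,\E\bigl[|v^n|^2\bigr]$ using only an exponential moment bound on $V^n$, because $|V^n|^p$ is unbounded and $|v^n|^2$ carries no higher integrability. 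The device the paper uses (the same one as in the proof of Theorem~\ref{thm:main}) is a truncation at a level $R$: split the expectation over $\{|V^n|\le R\}$, giving a factor $(1+R^p)$ times $\gamma^n$, and over $\{|V^n|>R\}$, where Cauchy--Schwarz plus the exponential moment gives a constant times $e^{-bR^p}$. This produces
\[
\frac{d}{dt}\gamma^n(t)\;\le\;C\bigl(r\,\gamma^n(t)+\gamma^{n-1}(t)+e^{-r}\bigr)\quad\text{for all }r>1,
\]
and the $e^{-r}$ term is precisely why the Picard map is \emph{not} a contraction: the iterates $\gamma^n$ converge to a fixed point of size comparable to $e^{-r}$, not to zero, for any fixed $r$. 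The paper closes the argument by iterating Gronwall and \emph{choosing $r=n$}, so that the error term decays geometrically in $n$ while the multiplicative Gronwall constant $e^{Crt}$ stays under control for $t\le T_*$ small. This exchange between the truncation level and the iteration index is the key new ingredient compared with the globally Lipschitz theory, and your plan as written does not contain it. Without it, the short-time fixed point does not close.

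\textbf{The detour: PDE uniqueness.} For uniqueness in (iii) you propose invoking a superposition principle to realize a competing weak solution as the marginal law of the linear SDE. Superposition results for Fokker--Planck equations come with integrability hypotheses on the drift that are not automatic here because $F+H*g_t$ has polynomial growth, so you would owe a verification. The paper avoids superposition entirely: it first proves uniqueness of the \emph{linear} PDE by a duality argument (solving the backward dual equation and showing $\int f_t h_t$ is constant), then, given two weak solutions $f^1,f^2$ of the nonlinear PDE, it builds processes solving the linear SDEs with frozen coefficients $F+H*f^i_t$ and uses the linear PDE uniqueness to identify the laws of those processes with $f^i_t$. After that identification, the same truncated Gronwall estimate as above, with $r=-\ln\gamma(t)$, gives $\gamma'\le -2C\gamma\ln\gamma$, hence $\gamma\equiv0$. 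This is self-contained, whereas the superposition route leaves something to check.

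Everything else in your sketch is compatible with the paper: part (i) via the one-sided bound \eqref{eq:F-one-sided-growth} and the linear growth \eqref{eq:H-two-sided-sys}; the linear SDE step and its continuity in the $W_2$ topology; and the moment estimate via an Itô computation on $e^{\alpha(t)\ap{v}^p}$ with $\alpha$ decreasing (the paper uses $\alpha(t)=Me^{-2Ct}$ and explicitly invokes $p\le2$ to dominate $\ap{v}^{2p-2}$ by $\ap{v}^p$, which is where that hypothesis enters).
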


\

Concerning the hypotheses on $F$, let us remark that we could also
ask $F$ to satisfy similar properties as $H$ in
\eqref{eq:H-two-sided-sys}--\eqref{eq:H-loc-Lipschitz}, but
\eqref{eq:F-one-sided-growth}--\eqref{eq:F-loc-Lipschitz-x} are
slightly weaker.

%%%%%%%%%%%%%%%%%%%%%%%%%%%%%%%%%%%%%%%%%%%%%%%%%%%%%%%%%%%%%%%%%%%%%%%%%%%%%%%%%%%%%%%%%%%%

\subsection{Examples, extensions and variants}\label{sec:ex}

As discussed above the drift $F$ models exterior or local effects,
such as self propulsion, friction and confinement. In our motivating
examples $F(x,v) = 0$ in the Cucker-Smale model and $F(x,v) =
(\beta \vert v \vert^2 - \alpha)v$ in the D'Orsogna et al model.
On the other hand, $H$ models the interaction between individuals
at $(x,v)$ and $(y,w)$ in the phase space being $H(x,v) = a(x) v $ with $a(x) = (1 + \vert x
\vert^2)^{-\gamma}$, $\gamma >0$
in the Cucker-Smale model  and $H(x,v)= - \nabla U(x)$
in the D'Orsogna et al model. It is straightforward to check the
assumptions of Theorems \ref{thm:main} and \ref{thm:mainexist} in
these two cases.

Of course, more general relaxation terms towards fixed ``cruising
speed'' are allowed in the assumptions of Theorem \ref{thm:main},
for instance: $F(x,v) = (\beta(x) \vert v \vert^\delta -
\alpha(x))v$ with $\alpha, \beta$ globally Lipschitz bounded away
from zero and infinity functions and $\delta>0$. Also, concerning
the interaction kernel we may allow $H(x,v)=a(x) \vert v
\vert^{q-2} v$ with $q\geq 1$ for a bounded and Lipschitz $a$ in
Cucker Smale as introduced in \cite{HaHaKim}. This has the effect
of changing the equilibration rate towards flocking, see
\cite{cfrt09,HaHaKim} for details. However, the assumptions on
existence and moment control in Theorem \ref{thm:mainexist} are
only verified for $q=2$. Other more general mechanisms can be
included such as the one described in \cite{LLE}.

\subsubsection{Variants on the assumptions}

We first remark two simple extensions of the results in
Theorem~\ref{thm:main} by trading off growth control on $F$ and $H$
by moment control of the solutions to \eqref{eq:sdesys}:
\begin{itemize}
\item[{\bf V1.}] Theorem~\ref{thm:main} holds while weakening
assumption \eqref{hypaf1} on $F$ and $H$ to
$$
(v-w) \cdot (F(x,v) - F(x,w)) \geq - A \vert v - w \vert^2 (1 +
\vert v \vert^p + \vert w \vert^p)
$$
both for $F$ and $H$. Solutions in this case need to satisfy
$$
\sup_{N \geq 1} \sup_{0 \leq t \leq T} \E\left[ e^{b \vert V^1_t
\vert^p}\right] <+ \infty
$$
on the particle system or equivalent conditions on $p'$ for the
second estimate \eqref{eq:main2}. Observe by lower continuity and
weak convergence in $N$ of the law of $(X^1_t, V^1_t)$ to the law
$f_t$ of $(\X^1_t, \V^1_t)$ that this is a stronger assumption
than part of the assumption \eqref{hyp:unifmoment} made in
Theorem~\ref{thm:main}, more precisely
$$
\sup_{0 \leq t \leq T} \E\left[ e^{b \vert \V^1_t \vert^p}\right]
< +\infty\, .
$$
Observe also that we may not have global existence in this case
since for instance $F(x,v) = - v^3$ on $\rr^2$, which leads to
blow up in finite time, satisfies this new condition with $p=2$.

\item[{\bf V2.}] One can remove the antisymmetry assumption on $H$ in Theorem~\ref{thm:main} by
  imposing
  \begin{equation*}
    | H(x,v) - H(x,w)| \leq A \,\vert v- w \vert
  \end{equation*}
instead of \eqref{hypaf1} for $H$. The reader can check that very
little modifications are needed at the only point in the proof below
where the symmetry of $H$ is used, namely, when bounding term
$I_{21}$. Actually, one can directly carry out the estimates
instead of symmetrizing the term first. From the modeling point of
view, it is important to include the non-antisymmetric case since
some more refined swarming IBMs include the so-called ``cone of
vision'' or ``interaction region''. In these models, individuals
cannot interact with all the others but rather to a restricted set
of individuals they actually see or feel, see \cite{LLE,cftv10,AIR10}.
From the mathematical point of view this implies that the
interaction term $H \ast f_t$ need not always be a convolution but must be replaced by$$
  H [f_t] (x,v) = \int_{\rr^{2d}} H(x,v; y,w) \, df_t(y,w);
$$
here $H(x,v; \cdot , \cdot )$ is compactly supported in a region that
depends on the value of $(x,v)$ and $H(y,w; x,v)$ is not necessarily equal to $- H(x,v; y,w)$.
Our results extend to this case.

\item[{\bf V3.}] Theorem~\ref{thm:main} also holds when $F$ is an
exterior drift in position only, non globally Lipschitz, for
instance satisfying
$$
\vert F(x) - F(y) \vert
\leq
A \vert x - y \vert (1 + \vert x \vert^q + \vert y \vert^q)
$$
with $q> 0$. Now, the moment control condition
\eqref{hyp:unifmoment} has to be reinforced by assuming
$$
\sup_{0 \leq t \leq T} \E\left[ e^{b \vert \X^1_t \vert^q}\right]
< +\infty \quad \textrm{and} \quad \sup_{N \geq 1} \sup_{0 \leq t
\leq T} \E\left[ e^{b \vert X^1_t \vert^q}\right] <+ \infty
$$
on the particle system or equivalent conditions on $p'$ for the
second estimate \eqref{eq:main2}. Observe again by weak
convergence in $N$ of the law of $(X^1_t, V^1_t)$ to the law $f_t$
of $(\X^1_t, \V^1_t)$ that the latter new moment control assumption is
stronger.
\end{itemize}

\subsubsection{Extensions to nonlinearly dependent diffusion coefficient}\label{subsec:vardiff}

Some researchers have recently argued that the diffusion
coefficient at a given point $x$ may depend on the neighbours of
the point to be considered \cite{DFT10,yates-etal}. More
precisely, they can depend on local in space averaged quantities
of the swarm, such as the averaged local density or velocity.
The averaged local density at $x$ in the particle system $(X^i_t,
V^i_t)$ for $1 \leq i \leq N$ is defined as
$$
\frac{1}{N} \sum_{j=1}^N \eta_{\varepsilon} (x-X^j_t);
$$
from which its corresponding continuous version is
$$
\int_{\rr^{2d}} \eta_{\varepsilon} (x-y) df_t(y,w).
$$
Here $\eta_{\varepsilon} (x) = \frac{1}{\varepsilon^d} \eta
\Big(\frac{x}{\varepsilon} \Big)$ where $\eta$ is a nonnegative
radial nonincreasing function with unit integral but non
necessarily compactly supported and $\varepsilon$ measures the
size of the interaction. The name of ``local average'' comes from
the smearing of choosing $\eta_{\varepsilon}$ instead of a Dirac
delta at $0$, which would be meaningless in the setting of a
particle system. Such a diffusion coefficient is considered
in~\cite{DFT10} with $\eta (x) = \frac{Z}{1+\vert x \vert^2}$ and
$\varepsilon = 1$, from the point of view of the long-time behaviour of solutions to the kinetic equation, not of the mean-field limit: there the particle system evolves according to
the diffusive Cucker-Smale model
$$
\begin{cases}
  d X_t^i = V_t^i dt\\
\displaystyle{dV_t^i = \sqrt{\frac{1}{N} \sum_{j=1}^N a( X^i_t - X^j_t )} \, dB_t^i -  \frac{1}{N} \sum_{j=1}^N a(X_t^i - X_t^j) (V_t^i-V_t^j) dt,} \qquad 1 \leq i \leq N\\
\end{cases}
$$
with $a(x) = \frac{Z}{1+\vert x \vert^2}$.

Other local quantities upon which the diffusion coefficient may
depend on is the averaged local velocity at $x$ defined as
$$
\bar u (x) :=\frac{1}{N} \sum_{j=1}^N V^j_t \, \eta_{\varepsilon}
(x-X^j_t)
$$
in the particle system, and
$$
\int_{\rr^{2d}} w \,  \eta_{\varepsilon} (x-y) df_t(y,w)
$$
in the continuous setting. More generically, we can consider
diffusion coefficients in the particle system such as
\begin{equation}\label{eq:localav}
g \left( \frac{1}{N} \sum_{j=1}^N h(V^j_t) \,  \eta_{\varepsilon}
(x-X^j_t) \right).
\end{equation}
Here, $\eta_{\varepsilon}$ controls which individuals we should
take into account in the average and with which strength; among
these $X^j$, how each velocity influences at $x$ is controlled by
$h$; finally, after averaging over those $j$, $g$ controls how we
should compute the diffusion coefficient.

For instance, given the diffusion coefficient
$g(\overline{u}(x))$, we could argue that $g$ should be large for
small $\overline{u}(x)$ (large noise for small velocity), and
conversely; there we see $g$ as an even function, nonincreasing on
$\rr^+$. Similar coefficients were used in \cite{yates-etal} of
the form
$$
g \left( \frac{1}{N(x)} \sum_{j=1}^{N(x)} V^j_t \,
\eta_{\varepsilon} (x-X^j_t) \right)
$$
with $\eta (x) = Z \1_{\vert x \vert \leq 1}$ and $N(x) =
\sharp\{j; \vert x - X^j_t \vert \leq \varepsilon\}$. However, we
cannot include this scaling in the mean-field setting. 

On the other hand, mean-field limits such as those in Theorem
\ref{thm:main} have been obtained in~\cite{Mel96,Szn91} with the
diffusion coefficient
$$
\frac{1}{N} \sum_{j=1}^N \sigma(x,v; X^j_t, V^j_t)
$$
where $\sigma$ is a $2d \times 2d$ matrix with globally Lipschitz coefficients.

\medskip

We include the two variants above by considering diffusion coefficients of the form
$$
\sigma[X^i_t, V^i_t; \hat{f}^N_t]
$$
where, for a probability measure $f$ on $\rr^{2d}$, $\sigma[z; f]$ is a $2d \times 2d$ matrix with coefficients
$$
\sigma_{kl} [z; f] = g \Big( \int_{\rr^{2d}} \sigma_{kl} (z, z') \, df(z') \Big)
$$
in the notation $z = (x,v), z' = (x',v') \in \rr^{2d}.$ We shall assume that $g$ is globally Lipschitz on $\rr$ and
\begin{multline*}
\vert \sigma_{kl} (z, z') - \sigma_{kl} (\bar{z}, \bar{z}') \vert
\\
\leq
C \big( \min \{\vert x - \bar{x} \vert + \vert x' - \bar{x}' \vert , 1 \} + \vert v - \bar{v} \vert + \vert v' - \bar{v}' \vert  \big) (1 + \vert v \vert^q + \vert v'  \vert^q +   \vert \bar{v} \vert^q + \vert \bar{v}'  \vert^q.
 \big)
 \end{multline*}

In this notation, \cite{Szn91} corresponds to $g(x) = x$ and $\sigma_{kl}$ bounded and Lipschitz, and~\eqref{eq:localav} to $\sigma_{kl} (z,z') = h(v') \, \eta_{\varepsilon} (x-x')$ where the kernel $\eta_{\varepsilon}$ is bounded and Lipschitz and
$$
\vert h(v) - h(v') \vert \leq C \vert v - v' \vert (1 + \vert v \vert^q + \vert v'  \vert^q).
$$
Observe that this framework does not include the model considered in~\cite{DFT10} for which the diffusion coefficient is given by a non locally Lipschitz $g$.

In this notation and assumption, if furthermore there exists $b>0$ such that
$$
\sup_{0 \leq t \leq T} \E\left[ e^{b \vert \X^1_t \vert^{2q}}\right]
< +\infty \quad \textrm{and} \quad \sup_{N \geq 1} \sup_{0 \leq t
\leq T} \E\left[ e^{b \vert X^1_t \vert^{2q}}\right] <+ \infty
$$
on the nonlinear process and particle system, then~\eqref{eq:main1} holds in Theorem~\ref{thm:main}, and correspondingly with $p'$  for the second estimate \eqref{eq:main2} (see Remark~\ref{rem:vardiff}).

\

\subsubsection{One-variable formulation}

We now give a formulation of the mean-field limit in one variable $z
\in \rr^D$, to be thought of as $z = (x,v) \in \rr^{2d}$ as in our
examples above or as $z = v \in \rr^d$ in a space-homogeneous
setting. We consider the particle system
$$
dZ^i_t = \sigma \, dB^i_t - F(Z^i_t) \, dt - \frac{1}{N}
\sum_{j=1}^N H(Z^i_t - Z^j _t) \, dt, \quad 1 \leq i \leq N
$$
where $\sigma$ is a (for instance) constant $D \times D$ matrix, the $(B^i_t)_{t
\geq 0}$ are $N$ independent standard Brownian motions on $\rr^D$
and the initial data $Z^i_0$ are independent and identically
distributed. We also consider the nonlinear processes $(\Z^i_t)_{t
\geq 0}$ defined by
$$
\begin{cases}
  d\Z_t^i= \sigma dB_t^i - F(\Z_t^i) dt -  H \ast f_t (\Z_t^i) dt,\\
  \Z_0^i = Z_0^i, \; f_t = \textrm{law}(Z_t^i).
\end{cases}
$$
Assume now that there exists $C>0$ such that
$$
(z-z') \cdot (F(z) - F(z')) \geq - C \vert z - z' \vert^2 (1 +
\vert z \vert^p + \vert z' \vert^p)
$$
for all $z, z' \in \rr^D$ with $p>0$.  Assume also global
existence and uniqueness of these processes, with
$$
\sup_{N \geq 1} \sup_{0 \leq t \leq T} \E\left[e^{b \vert Z^1_t
\vert^p}\right] <+ \infty
$$
and
$$
\sup_{0 \leq t \leq T} \left\{ \int_{\rr^D} e^{b \vert z \vert^p}
df_t(z) + \int_{\rr^{2D}} \vert H(z-z') \vert^2 df_t(z) \,
df_t(z')\right\} <+ \infty,
$$
or equivalent conditions on $p'$. Then \eqref{eq:main1} and
\eqref{eq:main2} in Theorem~\ref{thm:main} holds.

%%%%%%%%%%%%%%%%%%%%%%%%%%%%%%%%%%%%%%%%%%%%%%%%%%%%%%%%%%%%%%%%%%%%%%%%%%%%%%%%%%%%%%%%

\section{Mean-field limit: proof}
\label{sec:limit}

This section is devoted to the proof of Theorem~\ref{thm:main}. We
follow the coupling method \cite{Szn91,Mel96,villani-champmoyen}. Given
$T>0$, we will use $C$ to denote diverse constants depending on $T$, the
functions $F$ and $H$, and moments of the solution $f_t$ on $[0,T]$,
but not on the number of particles $N$. 

\begin{proof}[Proof of Theorem \ref{thm:main}] Let us define the
  fluctuations as $x^i_t := X^i_t - \X^i_t$, $v^i_t := V^i_t -
  \V^i_t$, $i = 1,\dots,N$. For notational convenience, we shall drop
  the time dependence in the stochastic processes. As the Brownian
  motions $(B^i_t)_{t \geq 0}$ considered in \eqref{eq:sdesys} and
  \eqref{eq:nlSDE} are equal, for all $i=1, \dots, N$, we deduce
\begin{align}
  dx^i = \,&v^i \, dt \, ,\label{eq:CS-vidot0}\\
   dv^i = \, & - \big( F(X^i, V^i) - F(\X^i, \V^i) \big) dt \nonumber\\
   \, & -
\frac{1}{N}\sum_{j=1}^N \left( H(X^i \! - \! X^j, V^i-V^j)
    - (H*f_t)(\X^i,\V^i) \right) dt.   \label{eq:CS-vidot}
\end{align}
Let us consider the quantity $\alpha(t) = \E \left[ |x^i|^2
  +|v^i|^2\right]$ (independent of the label $i$ by symmetry), which
bounds the distance $W_2^2(f_t^{(1)},f_t)$ as remarked in
\eqref{prelim1}. Then, by using
\eqref{eq:CS-vidot0}-\eqref{eq:CS-vidot}, we readily get
\begin{equation}\label{eq:diffxi}
    \frac{1}{2} \frac{d}{dt} \E\left[|x^i|^2\right]
  =  \E\left[ x^i \cdot v^i\right] \leq   \frac{1}{2} \alpha(t)
\end{equation}
and
\begin{multline}
  \label{eq:diffvi}
  \frac{1}{2}\frac{d}{dt} \E\left[|v^i|^2\right]
  =
  - \E\left[ v^i \cdot \big(F(X^i, V^i) - F(\X^i,
    \V^i) \big) \right]
  \\
  \, -\frac{1}{N}
  \E\left[ \sum_{j=1}^N
    v^i \cdot
    \left( H(X^i \!  -  \! X^j, V^i \! - \! V^j)
      - H*f_t(\X^i, \V^i) \right)\right]
  =: I_1 + I_2.
\end{multline}
  
{\it Step 1.- Estimate $I_1$ by moment bounds:} We decompose $I_1$
in \eqref{eq:diffvi} as
$$
I_1=-\E\left[ v^i \cdot \big(F(X^i, V^i) - F(X^i, \V^i) \big)
\right] - \E\left[ v^i \cdot \big(F(X^i, \V^i) - F(\X^i, \V^i)
\big) \right].
$$
By assumption \eqref{hypaf1}-\eqref{hypaf2} on $F$, $I_1$ can
be controlled by
$$
I_1\leq A \, \E\left[ | v^i |^2 \right] + L \, \E \left[  | v^i |
\, \min\{| x^i |, 1\} \,  (1 + | \V^i |^p) \right]:=
I_{11}+L\, I_{12}\, .
$$
Given $R>0$, the second term $I_{12}$ is estimated according to
\begin{align*}
I_{12}\leq \, & \, \E [ | v^i | \, | x^i | ] +  \E \! \left[ \1_{| \V^i| \leq R} \, | v^i | \, \min\{|
x^i |, 1\} \, | \V^i |^p \right] \! + \E \! \left[ \1_{| \V^i| >
R} \,| v^i | \,  \min\{| x^i |, 1\}  \,   | \V^i |^p \right]
\\
\leq\,& (1+ R^p) \E\left[ | v^i | \, | x^i | \right]
 + \frac{1}{2} \E\left[ | v^i |^2 \right] + \frac{1}{2} \E \left[ \1_{| \V^i| > R} \,   | \V^i |^{2p} \right]
 \\
\leq\,& (1 + R^p) \alpha(t) + \frac{1}{2} \left( \E\left[  |
\V^i |^{4p} \right] \right)^{1/2} \left( \E \left[ \1_{| \V^i| > R}
\right] \right)^{1/2}
\end{align*}
by the Young and the Cauchy-Schwarz inequalities. Invoking the Markov
inequality, hypothesis \eqref{hyp:unifmoment} implies that there
exists $C>0$ such that
\begin{equation}
  \label{eq:markovexp}
  \E \left[ \1_{| \V^i_t| > R} \right] \leq e^{-a R^p} \,\E\left[
    e^{a | \V^i_t |^p}\right] \leq C \, e^{-a \, R^p}
\end{equation}
for all $i$ and $0\leq t \leq T$. By defining $r= a R^p /2$, we
conclude that given $T>0$, there exists $C>0$ such that
\begin{equation}\label{concl1}
I_1\leq C (1+r) \, \alpha(t) + C \, e^{- r}
\end{equation}
holds for all $r>0$ and all $0\leq t \leq T$.

\medskip

{\it Step 2.- Estimate $I_2$ by moment bounds:} We decompose the
second term in \eqref{eq:diffvi} as
\begin{equation}
  \label{eq:twotermsH}
  \begin{split}
    I_2 = \,&-\frac{1}{N} \E\left[ \sum_{j=1}^N v^i \cdot \left( H(X^i
        \! - \! X^j, V^i-V^j) - H(\X^i \!  - \!  \X^j,\V^i \! - \!
        \V^j) \right)\right]
    \\
    \,&- \frac{1}{N} \E\left[ v^i \cdot \left(H(0,0)-
        (H*f_t)(\X^i,\V^i) \right)\right]
    \\
    \,&- \frac{1}{N} \E\left[ \sum_{j\neq i}^N v^i
      \cdot \left(H(\X^i \! - \!  \X^j, \V^i \! - \! \V^j)
        - (H*f_t)(\X^i,\V^i) \right)\right]
    \\
    =: \,& I_{21}+I_{22}+I_{23}.
  \end{split}
\end{equation}
Since all particles are equally distributed and $H$ is antisymmetric,
we rewrite $I_{21}$ as
$$
I_{21} = - \frac{1}{2N^2}
   \sum_{i,j=1}^N
    \E\left[(v^i-v^j) \cdot \left( H(X^i  \! -  \! X^j, V^i  \! -  \! V^j)
    - H(\X^i  \! -  \! \X^j, \V^i  \! -  \! \V^j) \right) \right].
$$
Analogously to the argument used to bound $I_1$ in the first step, for
each $(i,j)$ we introduce the intermediate term $H(X^i-X^j,
\V^i-\V^j)$, split the expression in two terms, and estimate the
corresponding expectations using \eqref{hypaf1}-\eqref{hypaf2} on $H$
by
\begin{equation}\label{eq:Hij2terms}
I_{21}\leq A \, \E\left[ | v^i - v^j |^2 \right]+ L \, \E \left[ |
v^i - v^j | \, \min\{ | x^i - x^j |, 1 \} \, ( 1 + | \V^i - \V^j
|^p) \right]\, .
\end{equation}
For a given $R > 0$, and fixed $(i,j)$, consider the event $
\mathcal{R} := \{ |\V_i| \leq R, \ |\V_j| \leq R \}$ and the
random variable $\mathcal{Z}:=| v^i - v^j | \, \min\{ | x^i - x^j
|, 1 \} \, ( 1 + | \V^i - \V^j |^p)$. Then the last expectation
in~\eqref{eq:Hij2terms} can be estimated as follows, using again the
Young and Cauchy-Schwarz inequalities:
\begin{align}
  \E \left[ \mathcal{Z} \right]
  =\,& \E \left[ \1_{\mathcal{R}}
    \mathcal{Z} \right] + \E \left[ \1_{\mathcal{R}^C} \mathcal{Z}
  \right]
  \nonumber
  \\
  \leq \, & (1 + 2^p R^p) \, \E\left[ |v^i-v^j| \, | x^i - x^j
    |\right] + \frac{1}{2} \E\left[ | v^i - v^j |^2 \right]
  \nonumber
  \\
  \,&+ \E \left[ \1_{\mathcal{R}^C} \, ( 1 + | \V^i - \V^j |^p)^{2}
  \right]
  \nonumber
  \\
  \leq \,& 2 (1 + 2^p R^p) \, \alpha(t) + 2 \alpha(t) +  \left(\E
    \left[\1_{\mathcal{R}^C}\right] \right)^{1/2}
  \; \left(\E\left[
      (1 + |\V^i-\V^j|^p)^{4} \right] \right)^{1/2}
  \nonumber
  \\
  \leq \,& 2 (2 + 2^p R^p) \, \alpha(t)
  + C \left(\E \left[\1_{| \V^i
        | >R}\right]  + \E \left[\1_{| \V^j | >R}\right]
  \right)^{1/2}
  \;
  \left( 1 + \E\left[ | \V^i |^{4p} \right] \right)^{1/2}
  \nonumber
  \\
  \leq \,& C (1+R^p) \alpha(t) + C  \,  e^{-a \, R^p/2}\label{tech1}
\end{align}
by hypothesis~\eqref{hyp:unifmoment}. Inserting \eqref{tech1} into
\eqref{eq:Hij2terms} and defining $r= a R^p/2$, we conclude that given
$T>0$, there exists $C>0$ such that
\begin{equation}\label{concl21}
I_{21}\leq C (1+r) \, \alpha(t) + C \, e^{- r}
\end{equation}
holds for all $r>0$ and all $0\leq t \leq T$.

We now turn to estimate $I_{22}$, i.e., the second term
in~\eqref{eq:twotermsH}. Using that $H(0,0)=0$, we get
\begin{equation}\label{tech4}
  I_{22}
  \leq
  \frac{1}{N} \left(
    \E\left[|v^i|^2\right]
  \right)^{1/2}
  \left( \E\left[ \big| (H*f_t)(\X^i,\V^i) \big|^2\right]
  \right)^{1/2}
  \leq
  \frac{C}{N} \sqrt{\alpha(t)}\, .
\end{equation}
The latter inequality follows from
\begin{equation}\label{eq:estintH}
\E\left[ \big| (H*f_t)(\X^i,\V^i) \big|^2 \right] =
\int_{\rr^{4d}} | H(x-y, v-w) |^2 \, df_t(x,v) \, df_t(y,w),
\end{equation}
which is bounded on $[0,T]$ due to hypothesis
\eqref{hyp:unifmoment}.

The last term $I_{23}$ is treated as in the classical case in
\cite[Page 175]{Szn91} by a law of large numbers argument. We include
here some details for the sake of the reader. By symmetry we assume
that $i=1$. We start by applying the Cauchy-Schwarz inequality to
obtain
$$
 I_{23} \leq \frac{1}{N}
  \left( \E\left[|v^1|^2\right] \right)^{1/2}
  \Bigg(
    \E\left[ \Big|
    \sum_{j=2}^N
   Y^j \Big|^2\right]
  \Bigg)^{1/2}
$$
where $Y^j := H(\X^1 \! - \! \X^j, \V^1 \! - \! \V^j) -
(H*f_t)(\X^1,\V^1)$ for $j \geq 2$. Note that, for $j \neq k$,
$$
\E\left[ Y^j \cdot Y^k \right] = \E\left[ \E\left[  Y^j \cdot Y^k
| (\X^1, \V^1) \right] \right] = \E\left[ \E\big[ Y^j | (\X^1,
\V^1) \big]  \cdot  \E \big[ Y^k | (\X^1, \V^1) \big] \right]
$$
by independence of the $N$ processes $(\X^j_t, \V^j_t)_{t \geq
0}$, where
$$
\E\left[ Y^j | (\X^1, \V^1) \right]
  = \int_{\R^{2d}}
  [ H(\X^1_t -y,\V^1_t - w) - (H*f_t)(\X^1_t,\V^1_t) ] \, f_t(y,w) \,dy \,dw =0
$$
since $(\X^j_t, \V^j_t)$ has probability distribution $f_t$.
Hence,
\begin{align*}
  \E\left[ \Big|
  \sum_{\substack{j=2}}^N
  Y^j
  \Big|^2\right]
  &= (N-1) \E\left[ |Y^{2}|^2 \right] \\
  &\leq (N-1) \int_{\rr^{4d}} | H(x-y, v-w) |^2 \, df_t(x,v) \, df_t(y,w) \leq C \,
  (N-1)
\end{align*}
as in~\eqref{eq:estintH} due to hypothesis \eqref{hyp:unifmoment}.
Therefore, we get
\begin{equation}\label{tech2}
I_{23}\leq \frac{C}{\sqrt{N}} \, \sqrt{\alpha(t)} \,.
\end{equation}
Hence, combining the estimates \eqref{concl21}, \eqref{tech4} and
\eqref{tech2} to estimate $I_2$, we get that there exists $C>0$
such that
\begin{equation}\label{concl2}
I_{2}\leq C (1+r) \, \alpha(t) + C \, e^{- r} + \frac{C}{\sqrt{N}}
\, \sqrt{\alpha(t)}
\end{equation}
holds for all $r>0$ and all $0\leq t \leq T$.

\medskip

{\it Step 3.- Proof of \eqref{eq:main1}:} It follows
from~\eqref{eq:diffxi},~\eqref{eq:diffvi},~\eqref{concl1},~\eqref{concl2},
the above estimates and the Young inequality that
$$
  \alpha'(t)
  \leq
  C \left(1 + r \right)
  \alpha(t)  + C e^{-r}
  +
  \frac{C}{\sqrt{N}} \sqrt{ \alpha(t) }
    \leq
  C \left(1 + r \right)
  \alpha(t)  + C e^{-r}
  +
  \frac{C}{N}
$$
for all $t \in [0,T]$, all $N \geq 1$ and all $r>0$. From this
differential inequality and Gronwall's lemma, we can first deduce that
the quantity $\alpha(t)$ is bounded on $[0,T]$, uniformly in $N$, by a
constant $D > 0$. Hence, the function $\beta(t) := \alpha(t) /(eD)$ is
bounded by $1/e$, so that $1 - \ln \beta \leq - 2 \ln \beta$. Now,
whenever $\beta(t) > 0$, take $r := - \ln \beta(t) > 0$. This choice
proves that, for any $t$ such that $\beta(t) > 0$,
\begin{equation}
  \beta'(t) \leq C (1 - \ln \beta(t) ) \, \beta(t) + \frac{C}{N}
  \leq -C \, \beta (t) \, \ln \beta(t) + \frac{C}{N}.\label{eq:diff-ineq-beta}
\end{equation}
Actually, the above inequality is also true whenever $\beta(t) = 0$
(with the convention that $z \log z = 0$ for $z = 0$), as can be seen
by choosing $r := \log N$ in that case. Hence,
\eqref{eq:diff-ineq-beta} holds for all $t \in [0,T]$. Now, the function
$u(t) := \beta(Ct)$ satisfies $u(0) = 0$ and
$$
u' \leq - u \, \ln u + \frac{1}{N}
$$
on $[0,T/C]$. Let finally $a(t)$ be a function on $[0,T/C]$, to be
chosen later on. Then the map $v(t) = u(t) \, N^{a(t)}$ satisfies
$v(0) = 0$ and
$$
v' \leq - v \ln v + N^{a-1} + v \, \ln N (a + a') \leq - v \ln v +
1 \leq \frac{1}{e} + 1
$$
on $[0,T/C]$ provided we choose $a(t) = e^{-t} \leq 1$. Hence,
this choice of $a(t)$ implies the bound
$$
v(t) \leq \Big(\frac{1}{e} +1 \Big) \frac{T}{C}
$$
for $0 \leq t \leq T/C$, that is,
$$
\ee \big[ | X^i_t - V^i_t |^2 + | \X^i_t - \V^i_t |^2 \big] =
\alpha(t) \leq C N^{-e^{-Ct}}
$$
for $0 \leq t \leq T$, and thus, \eqref{eq:main1} is proven.

\medskip

{\it Step 4.- Proof of \eqref{eq:main2}:} If additionally there
exists $p'>p$ such that hypothesis \eqref{hyp:unifmoment2} holds,
then by the Markov inequality, estimate~\eqref{eq:markovexp} turns
into
$$
\E\left[ \1_{| \V^i_t| > R} \right] \leq e^{-a R^{p'}} \E\left[
e^{a | \V^i_t |^{p'}}\right] \leq C \, e^{-a \, R^{p'}}.
$$
Hence, following the same proof, the quantity $\alpha(t)$ finally
satisfies the differential inequality
$$
\alpha'(t) \leq C (1+r) \, \alpha(t) + C \, e^{- r^{p'/p}} +
\frac{C}{N}
$$
for all $N \geq 1$ and all $r >0$. If we choose $r = (\ln N)^{p/p'}$,
and since $\alpha (0) = 0$, this integrates to
$$
\alpha(t)
\leq
\frac{2}{N(1+r)} \Big( e^{C(1+r)t} - 1 \Big)
\leq
\frac{2}{N} \, e^{C(1+r)T}
=
2 \, e^{CT} \, e^{C (\ln N)^{p/p'} T - \ln N}.
$$
Given $\epsilon >0$, there exists a constant $D$ such that
$$
C (\ln N)^{p/p'} T - \ln N \leq D - (1 - \epsilon) \ln N
$$
for all $N \geq 1$, so that
$$
\alpha (t) \leq 2 \, e^{CT+D} \, N^{-(1-\epsilon)}
$$
for all $N \geq 1$. This concludes the proof of
Theorem~\ref{thm:main}.
\end{proof}

\begin{rem}\label{rem:vardiff}
In the  setting of subsection~\ref{subsec:vardiff} when the constant diffusion coefficient $\sqrt{2}$ governing the evolution of the particle $(X^i_t, V^i_t)$ is replaced by a more general 
$\sigma[X^i_t, V^i_t; \hat{f}_t],$
then, by the It\^o formula, we have to control the extra term
$$
\sum_{k,l} \ee \big[ \big\vert \sigma_{kl}[X^i_ t, V^i_t; \hat{f}^N_t] - \sigma_{kl}[\X^i_t, \V^i_t; f_t] \big\vert^2
\big].
$$
For that purpose we use the Lipschitz property of $g$, introduce the intermediate term $\displaystyle \frac{1}{N} \sum_{j=1}^N \sigma_{kl} \big((\X^i_t, \V^i_t),(\X^j_t, \V^j_t) \big)$ and adapt the argument used above to bound the term $I_2$.
\end{rem}

%%%%%%%%%%%%%%%%%%%%%%%%%%%%%%%%%%%%%%%%%%%%%%%%%%%%%%%%%%%%%%%%%%%%%%%%%%%%%%%%%%%%%%%%

\section{Existence and uniqueness}
\label{sec:existence}

This section is devoted to the proof of Theorem \ref{thm:mainexist} on
existence, uniqueness and propagation of moments for solutions of the
particle system~\eqref{eq:sdesys}, the nonlinear
process~\eqref{eq:nlSDE} and the associated PDE~\eqref{eq:pde}. This
provides a setting under which Theorem \ref{thm:main} holds, showing
that the existence and moment bound hypotheses are satisfied under
reasonable conditions on the coefficients of the equations and the initial data alone.

Given $T>0$, we will denote by $b$ and $C$ constants, that may change
from line to line, depending on $T$, the functions $F$ and $H$, and
moments of the initial datum $f_0$.

\subsection{Existence and uniqueness of the particle system}

Let us start by proving point \emph{i)} of Theorem
\ref{thm:mainexist}. In this section we let $f_0 \in \mathcal
P_2(\rr^{2d})$ and consider the particle system for $1 \leq i \leq N$:
\begin{equation}\label{eq:sdesyst}
\begin{cases}
  d X_t^i = V_t^i dt\\
\displaystyle{dV_t^i = \sqrt{2} dB_t^i - F(X_t^i, V_t^i) dt
- \frac{1}{N} \sum_{j=1}^N H(X_t^i - X_t^j, V_t^i-V_t^j) dt,} \\
\end{cases}
\end{equation}
with initial data $(X^i_0, V^i_0)$ for $1 \leq i \leq N$
distributed according to $f_0$. Here the $(B^i_t)_{t \geq 0}$, for
$i=1,\dots,N$, are $N$ independent standard Brownian motions on
$\rr^d$.

\begin{lem}\label{lem:existsde}
  Let $f_0 \in \mathcal P_2(\rr^{2d})$, and assume that $F$, $H$ are
  locally Lipschitz and satisfy \eqref{eq:F-one-sided-growth} and
  \eqref{eq:H-two-sided-sys}. For $1 \leq i \leq N$, take random
  variables $(X^i_0, V^i_0)$ with law $f_0$. Then~\eqref{eq:sdesyst}
  admits a pathwise unique global solution with initial datum $(X^i_0,
  V^i_0)$ for $1 \leq i \leq N$.
\end{lem}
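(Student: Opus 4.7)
The plan is to follow the classical strategy for SDEs with locally Lipschitz coefficients: localise by truncation, build a maximal solution, and then use the one-sided and sublinear growth assumptions to rule out explosion via a moment estimate and Gronwall's lemma.

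More precisely, I would first fix $R>0$ and introduce truncated drifts $F^R$ and $H^R$ which coincide with $F$ and $H$ on the ball of radius $R$ and are globally Lipschitz on $\rr^{2d}$ (e.g.\ by composing with a smooth cutoff). Since $F^R$ and $H^R$ are globally Lipschitz, the interaction term
\[
(x^1,v^1,\dots,x^N,v^N) \longmapsto \frac{1}{N} \sum_{j=1}^N H^R(x^i-x^j, v^i-v^j)
\]
is also globally Lipschitz in $\rr^{2dN}$, so the classical theory for Lipschitz SDEs yields a pathwise unique strong global solution $(X^{i,R}_t, V^{i,R}_t)_{1\le i\le N}$. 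Define the stopping time $\tau_R := \inf\{t\ge 0 : \max_i (|X^{i,R}_t|+|V^{i,R}_t|) \ge R\}$; by pathwise uniqueness, solutions corresponding to different truncation radii agree up to $\tau_R$, so we get a maximal strong solution on $[0, \tau_\infty)$ with $\tau_\infty = \lim_{R\to\infty} \tau_R$.

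The only real work is to show that $\tau_\infty = +\infty$ almost surely, by proving an a priori $L^2$ bound. Applying It\^o's formula to $|V^{i,R}_{t\wedge\tau_R}|^2$, using assumption \eqref{eq:F-one-sided-growth} to absorb the $-2 V^i\cdot F(X^i,V^i)$ term by $2C(1+|V^i|^2)$, and using the sublinear bound \eqref{eq:H-two-sided-sys} together with Young's inequality to estimate
\[
-\frac{2}{N}\sum_{j=1}^N V^i \cdot H(X^i-X^j, V^i-V^j) \;\le\; C\bigl(1 + |V^i|^2\bigr) + \frac{C}{N}\sum_{j=1}^N |V^j|^2,
\]
I take expectations (the stochastic integral is a true martingale up to $\tau_R$) and exploit the symmetry of the law: setting $\varphi_R(t) := \E[|V^{1,R}_{t\wedge\tau_R}|^2]$, exchangeability collapses the sum and gives $\varphi_R'(t) \le C(1+\varphi_R(t))$, hence $\sup_{t\le T}\varphi_R(t) \le C_T$ uniformly in $R$. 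The position bound follows from $|X^{i,R}_{t\wedge\tau_R}|^2 \le 2|X^i_0|^2 + 2t\int_0^t |V^{i,R}_s|^2 ds$, using the initial second moment from $f_0 \in \mathcal P_2(\rr^{2d})$.

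Combining these bounds, Markov's inequality gives $\P(\tau_R \le T) \le R^{-2}\E[\sup_{t\le T}(|X^{1,R}_{t\wedge\tau_R}|^2 + |V^{1,R}_{t\wedge\tau_R}|^2)] \to 0$ as $R\to\infty$, so $\tau_\infty = +\infty$ a.s., and the maximal solution is global. Pathwise uniqueness on $[0,T]$ for the original system is inherited from the uniqueness of each truncated problem, since any two solutions agree up to $\tau_R$ for every $R$. The main obstacle is ensuring the Gronwall estimate closes uniformly in $N$, which is exactly why the symmetry of the initial law and the sublinearity in \eqref{eq:H-two-sided-sys} are crucial: without either, the nonlocal interaction term could not be absorbed into a linear bound on the single-particle second moment.
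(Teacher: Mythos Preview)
Your approach is correct in spirit---truncation, localisation, and an $L^2$ a priori bound to preclude explosion is the standard machinery behind the existence theorem the paper invokes---but it is more laborious than necessary and leans on an assumption the paper deliberately avoids.

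The paper's proof is shorter: it writes the whole system as a single SDE $d\bZ^N_t = \sigma^N\,d\bB^N_t + \bb(\bZ^N_t)\,dt$ in $\rr^{2dN}$, and verifies the one-sided growth bound
\[
\langle \bZ^N, \bb(\bZ^N)\rangle \;\le\; C\big(N + \|\bZ^N\|^2\big)
\]
directly from \eqref{eq:F-one-sided-growth} and \eqref{eq:H-two-sided-sys}, then cites a standard result (Ethier--Kurtz, Ch.~5, Thms.~3.7 and 3.11) for locally Lipschitz drifts satisfying such a coercivity condition. No truncation or stopping-time argument is written out; it is packaged in the cited theorem.

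The substantive difference is that you close your Gronwall estimate on the \emph{single-particle} moment $\varphi_R(t)=\E[|V^{1,R}_{t\wedge\tau_R}|^2]$ by invoking exchangeability to collapse $\frac{1}{N}\sum_j \E|V^j|^2$ back to $\varphi_R$. The paper instead bounds the \emph{total} second moment $\sum_i(|X^i|^2+|V^i|^2)$ directly, which requires no symmetry of the joint law at all---indeed the Remark following the lemma stresses that independence of the initial data is not used. Your last paragraph therefore misidentifies the crux: for this lemma $N$ is fixed, uniformity in $N$ is irrelevant, and symmetry of the initial law is not what makes the estimate close. Summing over $i$ from the start would give you the same conclusion without that hypothesis, and would match the generality of the statement as written.
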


\begin{proof}
The system~\eqref{eq:sdesyst} can be written as the SDE
$$
d{\bZ^N_t} = \sigma^N
\, d{\bB^N_t} + {\bb} ({\bZ^N_t})
\, dt
$$
in $\rr^{2dN}$, where ${\bZ^N_t} = (X^1_t, V^1_t, \dots,
X^N_t,V^N_t)$. Here $\sigma^N$ is a constant $2dN \times 2dN$ matrix,
$({\bB^N_t})_{t \geq 0}$ is a standard Brownian motion on $\rr^{2dN}$,
and $\bb:\R^{2dN} \to \R^{2dN}$ is a locally Lipschitz function
defined in the obvious way. Moreover, letting $\ap{ \cdot, \cdot}$ be
the scalar product and $\Vert \cdot \Vert$ the Euclidean norm on
$\rr^{2dN}$, then for all ${\bZ^N} = (X^1, V^1, \dots, X^N, V^N)$,
\begin{align*}
    \ap{ {\bZ^N}, {\bb} ({\bZ^N}) }
    =&\,
    \sum_{i=1}^N X^i \cdot V^i
    - \sum_{i=1}^N V^i \cdot F(X^i, V^i)
    - \frac{1}{N} \sum_{i,j=1}^N V^i
    \cdot H(X^i \! - \! X^j, V^i \!- \! V^j)
    \\
    \leq&\, (C+\frac{1}{2}) ( N + \|{\bZ^N}\|^2)
  +  C  \frac{1}{N} \sum_{i,j=1}^N \vert V^i \vert (1 + \vert V^i - V^j \vert)
    \\
    \leq&\, C ( N + \|{\bZ^N}\|^2)
    +
    \frac{C}{2N} \sum_{i,j=1}^N (1 + |V^i|^2 + |V^j|^2)
    \leq
    C(N + \| {\bZ^N} \|^2).
\end{align*}
Here we have used the elementary inequality $2ab \leq a^2 + b^2$,
 and the bounds
\eqref{eq:F-one-sided-growth} and \eqref{eq:H-two-sided-sys}. This is
a sufficient condition for global existence and pathwise
uniqueness, see~\cite[Chapter 5, Theorems 3.7 and
3.11]{ethier-kurtz86} for instance.
\end{proof}

\begin{rem}
  For the existence we do not use any properties of symmetry of the
  system, and in particular we do not need the initial data to be
  independent.
  On the other hand, the condition \eqref{eq:H-two-sided-sys} in Lemma
  {\rm\ref{lem:existsde}} can be relaxed to
\begin{equation*}
    -v \cdot H(x,v) \leq C (1 + |v|^2),
\end{equation*}
if we impose that $H$ is antisymmetric, i.e., $H(-x,-v) = -H(x,v)$ for
all $x,v\in \R^d$. Actually, in this case we can perform a
symmetrization in $(i,j)$ to estimate the term involving $H$
by
\begin{align*}
  - \frac{1}{N} \sum_{i,j=1}^N V^i
  \cdot H(X^i \! - \! X^j, V^i \!- \! V^j)
  &=
  -
  \frac{1}{2N} \sum_{i,j=1}^N (V^i \! - \! V^j)
  \cdot H(X^i \! - \! X^j, V^i \!- \! V^j)
  \\
  &\leq
  \frac{C}{2N} \sum_{i,j=1}^N (1 + |V^i \!- \! V^j|^2)
  \leq
  C (N + \| {\bf Z^N} \|^2).
\end{align*}
\end{rem}

\subsection{Existence and uniqueness for the nonlinear process and PDE}
\label{sec:SDE-nonlinear}

In this section we prove points \emph{ii)} and \emph{iii)} in Theorem
\ref{thm:mainexist}, namely, the existence and uniqueness of solutions
to the nonlinear SDE \eqref{eq:nlSDE}:
\begin{equation*}
  \left\{
    \begin{split}
      &d\X_t=\V_t\,dt
      \\
      &d\V_t= \sqrt{2}\, dB_t- F(\X_t, \V_t)dt - H \ast f_t
      (\X_t,\V_t)dt,
      \\
      &f_t = \law(\X_t, \V_t), \quad \law(\X_0, \V_0) = f_0.
    \end{split}
  \right.
\end{equation*}
and to the associated nonlinear PDE \eqref{eq:pde}:
\begin{equation*}
  \partial_t f_t + v  \cdot \nabla_x f_t
  = \Delta_v f_t + \nabla_v \cdot ((F + H*f_t) f_t)\,,
  \quad t>0, \ x,v \in \rr^d\,,
\end{equation*}
under the hypotheses of Theorem \ref{thm:mainexist}. Notice that we
drop the superscript $i$ for the SDE \eqref{eq:nlSDE}, as the problem
is solved independently for each $i$. For the PDE \eqref{eq:pde},
we always consider solutions in the sense of distributions:
\begin{dfn}
  \label{dfn:solutionPDE}
  Assume that $F,H: \R^{2d} \to \R^{2d}$ are continuous, and that
  \eqref{eq:H-two-sided-sys} holds. Given $T > 0$, a function $f:[0,T]
  \to \P_2(\R^{2d})$, continuous in the $W_2$ topology, is a solution
  of equation \eqref{eq:pde} with initial data $f_0 \in \P_2(\R^{2d})$
  if for all $\varphi \in \mathcal{C}^\infty_0([0,T) \times \R^{2d})$
  it holds that
  \begin{equation}
    \label{eq:PDE-weak}
    \int_{\rr^{2d}} \varphi_0 \,df_0
    =
    -\int_0^T \!\!\! \int_{\rr^{2d}} \big( \partial_s \varphi_s + \Delta_v \varphi_s - \nabla_v \varphi_s \cdot
    (F + H * f_s) + \nabla_x \varphi_s \cdot v \big) \, df_s \, ds.
  \end{equation}
\end{dfn}

\

\noindent Notice that all terms above make sense due to the continuity
of $F,H$, equation \eqref{eq:H-two-sided-sys}, and the bound on the
second moment of $f_t$ on bounded time intervals (needed for $H*f_t$
to make sense). For the purpose of this definition, condition
\eqref{eq:H-two-sided-sys} can actually be relaxed to $|H(x,v)| \leq
C(1 + |x|^2 + |v|^2)$, but we will always work under the stronger
hypothesis below.

The proof spans several steps, that we split in several
subsections. Since some parts of the proof hold under weaker
conditions on $F$ and $H$, we will specify the hypotheses needed in
each part.

\subsubsection{Existence and uniqueness of an associated linear SDE}
\label{sec:linSDE}

Let us first consider a related linear problem: we want to solve the
SDE
\begin{equation}
  \label{eq:edslin}
  \begin{cases}
    d X_t = V_t dt
    \\
    dV_t = \sqrt{2} dB_t - F(X_t, V_t) dt -  (H*g_t) (X_t, V_t) dt
  \end{cases}
\end{equation}
for given $F, H:\R^{2d} \to \R^{d}$ and $g : [0,T] \to \P_2(\R^{2d})$.

\begin{lem}
  \label{lem:edslin}
  Assume that $F$ and $H$ are locally Lipschitz functions satisfying
  \eqref{eq:F-one-sided-growth}, \eqref{eq:H-two-sided-sys}, and
  \eqref{eq:H-loc-Lipschitz}. Let $f_0 \in \mathcal P_2(\rr^{2d})$ and
  $g:[0,T] \to \P_2(\R^{2d})$ be a continuous curve in the $W_2$
  topology. Then the equation \eqref{eq:edslin} with inital datum
  $(X_0, V_0)$ distributed according to $f_0$ has a global pathwise
  unique solution. Moreover this solution has bounded second moment on
  $[0,T]$.
\end{lem}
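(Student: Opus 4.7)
The plan is to regard \eqref{eq:edslin} as a time-inhomogeneous linear It\^o SDE in $\rr^{2d}$ with drift
$$
b_t(x,v) := \bigl(v,\;-F(x,v) - (H\ast g_t)(x,v)\bigr)
$$
and constant diffusion matrix, and to verify that $b_t$ satisfies the hypotheses of a standard strong existence and uniqueness theorem such as \cite[Chap.~5, Thm.~3.7 and 3.11]{ethier-kurtz86}, already invoked in Lemma~\ref{lem:existsde}. The genuinely new content compared to the particle system is that the local Lipschitz and growth assumptions on $H$ must be lifted to the convolution $H\ast g_t$, uniformly in $t\in[0,T]$.

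First I would use that $W_2$-continuity of $g:[0,T]\to(\P_2(\rr^{2d}),W_2)$ on the compact interval $[0,T]$ implies continuity of $t\mapsto\int(|x|^2+|v|^2)\,dg_t$ (since $W_2$-convergence is equivalent to weak convergence plus convergence of second moments), and hence a uniform bound $M := \sup_{t\in[0,T]}\int(|x|^2+|v|^2)\,dg_t < +\infty$. Plugging this into \eqref{eq:H-two-sided-sys} yields the uniform linear growth estimate $|(H\ast g_t)(x,v)|\le C(1+|v|)$, while combined with \eqref{eq:H-loc-Lipschitz} (using $p\le 2$ so that the $p$-th moment of $g_t$ is dominated by $M^{p/2}$ via Jensen) it yields a uniform local Lipschitz estimate
$$
|(H\ast g_t)(x,v)-(H\ast g_t)(x',v')| \le C_M (|x-x'|+|v-v'|)\bigl(1+|v|^p+|v'|^p\bigr).
$$
Continuity of $t\mapsto(H\ast g_t)(x,v)$ at fixed $(x,v)$ follows from $W_2$-continuity of $g_t$ together with \eqref{eq:H-loc-Lipschitz}. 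Combining with \eqref{eq:F-one-sided-growth}--\eqref{eq:F-loc-Lipschitz-x}, the drift $b_t$ is thus jointly measurable, locally Lipschitz in $(x,v)$ uniformly on $[0,T]$, and satisfies the one-sided growth condition
$$
\bigl\langle (x,v),\,b_t(x,v)\bigr\rangle \le C(1+|x|^2+|v|^2),
$$
which is exactly the input needed for pathwise uniqueness and global (non-explosion) strong existence.

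For the uniform second moment bound I would then apply It\^o's formula to $|X_t|^2+|V_t|^2$, using stopping times $\tau_n:=\inf\{t:|X_t|^2+|V_t|^2\ge n\}$ to kill the local martingale part, take expectations, and use the one-sided growth estimate above together with the constant It\^o correction $2d$ coming from the $\sqrt{2}\,dB_t$ term to obtain
$$
\frac{d}{dt}\E\bigl[|X_{t\wedge\tau_n}|^2+|V_{t\wedge\tau_n}|^2\bigr] \le C\bigl(1+\E[|X_{t\wedge\tau_n}|^2+|V_{t\wedge\tau_n}|^2]\bigr),
$$
closing by Gronwall and letting $n\to\infty$ by monotone convergence, using that $\E[|X_0|^2+|V_0|^2]<+\infty$ since $f_0\in\P_2(\rr^{2d})$. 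The main obstacle, though not a deep one, is simply the verification that the convolution $H\ast g_t$ inherits the local Lipschitz and one-sided linear growth properties of $H$ uniformly in $t\in[0,T]$; this reduces to the uniform second moment control on $g_t$ provided by its $W_2$-continuity on the compact interval $[0,T]$.
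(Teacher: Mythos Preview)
Your proposal is correct and follows essentially the same approach as the paper: both rewrite \eqref{eq:edslin} as an SDE on $\rr^{2d}$ with drift $b_t(x,v)=(v,-F(x,v)-(H\ast g_t)(x,v))$, verify that $H\ast g_t$ inherits local Lipschitz continuity and linear growth uniformly in $t$ from the uniform second-moment bound on $g_t$, obtain the one-sided inequality $\langle (x,v),b_t(x,v)\rangle\le C(1+|x|^2+|v|^2)$, and then invoke \cite[Chap.~5, Thms.~3.7 and 3.11]{ethier-kurtz86}. Your stopping-time localization for the second-moment Gronwall argument is a touch more careful than the paper's presentation, but the content is the same.
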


\begin{proof}
We rewrite~\eqref{eq:edslin} as
$$
  dZ_t = \sigma \, dB_t + b(t, Z_t) \, dt
$$
on $\rr^{2d}$, where $Z_t = (X_t, V_t)$. Here $\sigma$ is a $2d \times
2d$ matrix, $(B_t)_{t \geq 0}$ is a standard Brownian motion on
$\rr^{2d}$ and
$$
  b(t,x,v) = (v, - F(x,v) - (H*g_t)(x,v)).
$$
Let us first observe that the growth condition
\eqref{eq:H-two-sided-sys} on $H$ implies
\begin{equation}
    \label{eq:estH0}
    | (H*g_t) (x,v) | \leq
    C \!\int_{\rr^{2d}}\!\!\! (1 + |v-w|) \, dg_t(y,w)
\leq
    C \left(
      1 + |v| + \int_{\rr^{2d}} |w| \, dg_t(y,w)
    \right).
\end{equation}
This together with the Cauchy-Schwarz inequality results in
\begin{equation}
    \label{eq:estH}
    | v \cdot (H * g_t) (x, v) |
    \leq
    C \left(
      1 + \vert v \vert^2 + \int_{\rr^{2d}} \vert w \vert^2
      \, dg_t(y,w)
    \right)\,.
\end{equation}
Estimate \eqref{eq:estH0} ensures that, for fixed $x,v \in \R^d$, the
map $t \mapsto (H*g_t) (x,v)$ is bounded on $[0,T]$, using the fact
that the second moment of $g_t$ (and hence its first moment) is
uniformly bounded on $[0,T]$. Then, the same applies to $t \mapsto
b(t,x,v)$.

Also, the map $(x,v) \mapsto (H*g_t) (x,v)$ is locally Lipschitz,
uniformly on $t \in [0,T]$; indeed, for $x,v,y,w \in \R^d$, using
\eqref{eq:H-loc-Lipschitz} we get
\begin{align*}
  \big| (H*g_t)(x,v) &- (H*g_t) (y,w) \big|
  \\ \leq
     &\,
     \int_{\rr^{2d}} \Big\vert  H(x-X, v-V) - H(y-X, w-V) \Big\vert
     \, dg_t(X,V)
     \\
     \leq &\,
     L \, ( \big\vert x-y \big\vert + \big\vert v-w \big\vert )
     \int_{\rr^{2d}} (1+ \vert v-V \vert^p + \vert w-V \vert^p) dg_t(X,V)
     \\
    \leq &\,
    C ( \big\vert x-y \big\vert + \big\vert v-w \big\vert )
    \Big[ 1 + \vert v \vert^p + \vert w \vert^p
    + \int_{\rr^{2d}} \vert V \vert^p dg_t(X,V) \Big].
\end{align*}
The moment of $g_t$ above is bounded on $[0,T]$ since $p\leq 2$ and
the curve is continuous in the $W_2$-metric. As $F$ is also locally
Lipschitz, we conclude that $t \mapsto b(t,x,v)$ is locally Lipschitz.

Finally, for the scalar product $\ap{\cdot,\cdot}$ on $\rr^{2d}$,
we deduce
\begin{equation}\label{eq: driftxv}
  \ap{ (x,v) , b(t,x,v) }
  =
  x \cdot v - v \cdot F(x,v) - v \cdot (H*g_t) (x,v)
  \leq
  C \Big(1 + \vert x \vert^2+ \vert v \vert^2 \Big),
\end{equation}
by \eqref{eq:F-one-sided-growth}, \eqref{eq:estH}, and again the fact
that the second moment of $g_t$ is uniformly bounded on $[0,T]$. As in
the proof of Lemma~\ref{lem:existsde}, these are sufficient conditions
for global existence and pathwise uniqueness for solutions
to~\eqref{lem:edslin} with square-integrable initial data.

Moreover, by~\eqref{eq: driftxv},
$$
\frac{d}{dt} \ee [ \vert X_t \vert^2 + \vert V_t \vert^2 ]
=
2 d + 2 \, \ee \ap{ (X_t, V_t) , b(t, X_t , V_t) }
\leq
2d + C \ee \big[ 1 +  \vert X_t \vert^2+ \vert V_t \vert^2 \big],
$$
so that by integration the second moment $ \ee [ \vert X_t \vert^2 + \vert V_t \vert^2 ]$ is bounded on $[0,T]$.
\end{proof}

\subsubsection{Existence and uniqueness of an associated linear PDE}
\label{sec:linearPDE}

By It\^o's formula, the law $f_t$ of the solution of
\eqref{eq:edslin} at time $t$ is a solution of the following
linear PDE:
\begin{equation}
  \label{eq:linearPDE}
  \partial_t f_t + v \cdot \nabla_x f_t
  =
  \Delta_v f_t + \nabla_v \cdot (f_t \, (F + H * g_t))\,,
\end{equation}
in the distributional sense as in \eqref{eq:PDE-weak} of
Definition~\ref{dfn:solutionPDE}. Moreover, the curve $t \mapsto f_t$
is continuous for the $W_2$ topology.  Indeed, on the one hand
$$
W_2^2(f_t, f_s)
\leq
\ee \left[\vert X_t - X_s \vert^2 + \vert V_t
- V_s \vert^2\right].
$$
On the other hand, the paths $t \mapsto X_t (\omega)$ are continuous
in time for {\it a.e.} $\omega$, and $(X_t, V_t)$ has bounded second
moment on $[0,T]$; hence, by the Lebesgue continuity theorem, for
fixed $s$ the map $t \mapsto \ee \left[\vert X_t - X_s \vert^2 + \vert
  V_t - V_s \vert^2\right]$ is continuous, and hence converges to $0$
as $t$ tends to $s$. (Alternatively, one can obtain quantitative
bounds on the time continuity by estimating $\ee \left[\vert X_t - X_s
  \vert^2 + \vert V_t - V_s \vert^2\right]$ in the spirit of the last
equation in the proof of Lemma \ref{lem:edslin}; we do not follow this
approach here).

Then one can follow a duality argument in order to show that solutions
to \eqref{eq:linearPDE} are unique, which we sketch now. Take a
solution $f_t$ of \eqref{eq:linearPDE} with $f_0 = 0$; we wish to show
that $f_t = 0$ for any $t > 0$. For fixed $t_0 > 0$ and $\varphi$
smooth with compact support in $\rr^{2d}$, consider the solution
$h_t$ defined for $t \in [0,t_0]$ of the dual problem
\begin{gather*}
  \partial_t h_t + v \cdot \nabla_x h_t
  =
  - \Delta_v h_t + (F + H * g_t) \cdot \nabla_v h_t,
  \\
  h_{t_0} = \varphi.
\end{gather*}
This is a linear final value problem, and by considering $h_{t_0-t}$
one can show that it has a solution by classical arguments. In
addition, for each $t$, this solution $h_t$ is a continuous function,
as can be seen through classical results on propagation of
regularity. Then, as $h_t$ solves the dual equation of
\eqref{eq:linearPDE}, it holds that
\begin{equation*}
  \frac{d}{dt} \int f_t\, h_t = 0 \qquad (t \in (0,t_0)),
\end{equation*}
from which $\int f_{t_0} \varphi = \int f_{0} h_0 = 0$.  Since
$\varphi$ is arbitrary, this shows that $f_{t_0} = 0$ and proves the
uniqueness.

\subsubsection{Existence and uniqueness for the nonlinear PDE and SDE}

We are now ready to finish proving points \emph{ii)} and \emph{iii)}
in Theorem \ref{thm:mainexist}.

\paragraph{Step 1.- Iterative scheme:} Take $f^0 \in \mathcal P_2 (\R^{2d})$ and
random variables $(X^0,V^0)$ with law $f_0$, and let $(B_t)_{t
\geq
  0}$ be a given standard Brownian motion on $\rr^d$. We define the stochastic
processes $(X^n_t,V^n_t)_{t \geq 0}$ recursively by
$$
\begin{cases}
  dX^n_t=V^n_t\,dt \\
  dV^n_t =  \sqrt{2}\, dB_t - F(X^{n}_t, V^{n}_t)\,dt  -  (H * f^{n-1}_t) (X^{n}_t,V^{n}_t)\,dt,\\
  (X_0^{n},V_0^{n}) = (X^0, V^0)
\end{cases}
$$
for $n \geq 1$, where $f^n_t := \law(X_t^n,V_t^n)$ and it is
understood that $f^0_t := f^0$ for all $t \geq 0$. Observe that
these are linear SDEs for which existence and pathwise uniqueness
are given by Lemma~\ref{lem:edslin} since all $t \mapsto
f^{n-1}_t$ is continuous for the $W_2$ topology. We also know from
section \ref{sec:linearPDE} that the $f_t^n$  are weak solutions
to the PDE
\begin{equation*}
  \partial_t f^n_t + v \cdot \nabla_x f^n_t = \Delta_v f^n_t + \nabla_v \cdot ( f^n_t (F + H*f_t^{n-1})).
\end{equation*}
with initial condition $f_0$. More precisely, the following holds
for $n \geq 1$ and all $\varphi \in \mathcal{C}_0^{\infty} ([0,T)
\times \R^{2d})$:
\begin{equation}
  \label{eq:fn-PDE-weak}
  \int_{\rr^{2d}} \varphi_0 \, df^0 =
  - \int_0^T \int_{\rr^{2d}} ( \partial_s \varphi_s + \Delta_v \varphi_s - \nabla_v \varphi_s \cdot
  (F + H * f^{n-1}_s) + \nabla_x \varphi_s \cdot v ) \,df^n_s \, ds.
\end{equation}

\paragraph{Step 2.- Uniform estimates on moments of $f^n_t$:}

We shall prove the following lemma:
\begin{lem}\label{lem:momscheme}
Assume Hypothesis
\eqref{eq:F-one-sided-growth}--\eqref{eq:H-loc-Lipschitz} on $F$
and $H$. Let $f_0$ be a probability measure on $\rr^2$ such that
$$
  \int_{\rr^{2d}} \big( \vert x \vert^2 + e^{a |v|^p}\big)  \, df_0(x,v)
  < + \infty
$$
for a positive $a$. Then for all $T$ there exists a positive
constant $b$ such that, for the laws $f^n_t$ of the processes
$(X^n_t, V^n_t)$,
$$
  \sup_{n \geq 1} \sup_{0 \leq t \leq T}
 \int_{\rr^{2d}}   \big( \vert x \vert^2 + e^{b |v|^p} \big)  \,df^n_t(x,v)
  < + \infty.
$$
\end{lem}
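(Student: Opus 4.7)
I would split the proof into two tasks: controlling the quadratic position moment and the exponential velocity moment uniformly in $n$. The first is straightforward once the velocity is under control, since It\^o gives $\frac{d}{dt}\E[\vert X^n_t\vert^2] = 2\,\E[X^n_t\cdot V^n_t] \leq \E[\vert X^n_t\vert^2] + \E[\vert V^n_t\vert^2]$ and Gronwall then closes it. So the substantive part is the velocity estimate, which I would carry out in two steps.

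\emph{Step 1: uniform $L^2$ velocity bound.} Apply It\^o to $\vert V^n_t\vert^2$. The drift due to $F$ is controlled by \eqref{eq:F-one-sided-growth}. The convolution drift $H*f^{n-1}_t$ has size $C(1 + \vert v\vert + \sqrt{m_2^{n-1}(t)})$ by \eqref{eq:H-two-sided-sys}, with $m_2^{n-1}(t) := \int\vert w\vert^2 df^{n-1}_t$, so after Young's inequality
\begin{equation*}
\frac{d}{dt} m_2^n(t) \leq C\bigl(1 + m_2^n(t) + m_2^{n-1}(t)\bigr).
\end{equation*}
Setting $A(t) := \sup_{n\geq 0} m_2^n(t)$, the crucial observation $\sup_{n\geq 1} m_2^{n-1}(s) = \sup_{k\geq 0} m_2^k(s) = A(s)$ gives, after integration, $A(t) \leq C + C\int_0^t A(s)\,ds$, and Gronwall yields the uniform bound $\sup_{n,t\leq T} m_2^n(t) < \infty$. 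In particular $m_1^{n-1}(t) := \int\vert w\vert df^{n-1}_t \leq \sqrt{m_2^{n-1}(t)}$ is uniformly bounded.

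\emph{Step 2: uniform exponential velocity bound.} Apply It\^o to $\phi(t,v) := e^{b(t)\vert v\vert^p}$ with a positive decreasing $b(t)$ to be chosen, starting at $b(0) = a$, after smoothing near the origin if $p < 2$. Direct computation of $\nabla_v \phi = b(t)p\vert v\vert^{p-2} v\,\phi$ and $\Delta_v\phi = b(t)p\bigl[(d+p-2)\vert v\vert^{p-2} + b(t)p\vert v\vert^{2p-2}\bigr]\phi$, combined with \eqref{eq:F-one-sided-growth}--\eqref{eq:H-two-sided-sys}, yields schematically
\begin{equation*}
\frac{d}{dt}\E[\phi(t,V^n_t)] \leq \E\Bigl[\bigl(b'(t) + C b(t) + C b(t)^2\bigr)\vert V^n_t\vert^p \phi + C b(t)\vert V^n_t\vert^{p-1} m_1^{n-1}(t)\,\phi + C\phi\Bigr],
\end{equation*}
where all residual powers $\vert v\vert^{p-2}, \vert v\vert^{p-1}, \vert v\vert^{2p-2}$ are dominated by $1 + \vert v\vert^p$ thanks to $p \leq 2$. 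A Young split on $\vert v\vert^{p-1} m_1^{n-1}(t)$ together with a Riccati-type choice $b'(t) = -C'(b(t) + b(t)^2)$ (which keeps $b$ strictly positive on $[0,T]$) renders the coefficient of $\vert V^n_t\vert^p \phi$ nonpositive. The surviving inequality
\begin{equation*}
\frac{d}{dt}\E[\phi(t,V^n_t)] \leq C\bigl(1 + m_1^{n-1}(t)^q\bigr)\,\E[\phi(t,V^n_t)] + C
\end{equation*}
then closes by Gronwall, using the uniform bound on $m_1^{n-1}$ from Step 1.

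The hardest part will be the bookkeeping in Step 2: every $\vert v\vert^p\phi$ contribution---coming from the $b^2 p^2 \vert v\vert^{2p-2}$ Laplacian term, from the $F$ drift via $\vert v\vert^{p-2}(1+\vert v\vert^2)$, and from the $H$ drift convolved with $f^{n-1}_t$ via $\vert v\vert^{p-1}(1+\vert v\vert+m_1^{n-1})$---must be absorbed by the single negative term $b'(t)\vert v\vert^p\phi$ \emph{without} forcing $b(t)$ to hit zero before time $T$. This is exactly where the assumption $p \leq 2$ is essential, since otherwise $\vert v\vert^{2p-2}$ would outgrow $\vert v\vert^p$ and no Riccati choice of $b(t)$ could absorb it.
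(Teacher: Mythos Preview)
Your proposal is correct and follows essentially the same two-step strategy as the paper: first a uniform-in-$n$ second-moment bound via the recursive differential inequality $m_2^n{}'(t)\le C(1+m_2^n+m_2^{n-1})$, then an exponential moment bound using a time-decreasing exponent. The differences are purely cosmetic: the paper closes Step~1 by an explicit induction rather than your supremum-over-$n$ argument, and in Step~2 it works with $\langle v\rangle=(1+|v|^2)^{1/2}$ in place of $|v|$ (thereby avoiding your smoothing near the origin) and chooses the exponent $\alpha(t)$ so that the derivative is outright $\le 0$ rather than leaving a residual Gronwall term; both routes rely on $p\le 2$ in exactly the way you identify.
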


\begin{proof}
We prove this lemma in two steps.

{\it Step 1.- Bound for moments of order 2:} Let
$$
  e_n(t) = \int_{\rr^{2d}} |v|^2 \,df^n_t(x,v)
$$
for $n \geq 1$ and $t \geq 0$. Using \eqref{eq:F-one-sided-growth}
and \eqref{eq:estH} applied to the measure $f^{n-1}_t$, we get
\begin{align*}
    e'_n(t) = &\, \frac{d}{dt} \int_{\R^{2d}} |v|^2 \,df^n_t
    =
    2d  - 2 \int_{\R^{2d}} v \cdot ((F + H*f^{n-1}_t) \,df^n_t(x,v)\\
    \leq&\,
    2d + 2 C \int_{\R^{2d}}
    \left(1+|v|^2
    + \int_{\R^{2d}} \vert w \vert^2 df^{n-1}_t(y,w)
    \right)
    \,df^n_t(x,v)
    \\
    \leq&\,
    C  \big(1 + e_n(t) + e_{n-1}(t) \big)\, ,
\end{align*}
for diverse constants $C$ depending on $F$ and $H$ but not on $t$
or $n$. Since moreover
$$
  e_n(0) = e_0 (t) = e_0(0) = \int_{\rr^{2d}} |v|^2 \,df_0(x,v),
$$
for all $t$ and $n$, then one can prove by induction that
$$
  \sup_{n \geq 0} \int_{\R^{2d}} |v|^2 df^n_t
  \leq \Big( D + \frac{1}{2} \Big) e^{2 Dt} - \frac{1}{2}
$$
where $D = \max \{C, e_0(0)\}$. Moreover the bound
$$
\frac{d}{dt} \int_{\rr^{2d}} |x|^2 \,df^n_t(x,v) = 2 \int_{\rr^{2d}} x \cdot v  \,df^n_t(x,v)
\leq \int_{\rr^{2d}} |x| ^2 \,df^n_t(x,v) + e_n(t)
$$
ensures that also $\displaystyle \int_{\rr^{2d}} |x|^2 \,df^n_t(x,v) $ is bounded on $[0,T]$, uniformly in $n$.
  \medskip

\noindent {\it Step 2.- Bound for exponential moments:} Let
$\alpha= \alpha(t)$ be a smooth positive function to be chosen
later on and $\ap{v} = (1 + \vert v \vert^2)^{1/2}$. Then we have
the following \emph{a priori} estimate:
\begin{align*}
    \frac{d}{dt} \int_{\R^{2d}} e^{\alpha(t) \ap{v}^p} df^n_t(x,v)
    =
    \int_{\R^{2d}} \big[ d \alpha p \ap{v}^{p-2} + \alpha p (p-2) \vert v
    \vert^2 \ap{v}^{p-4} + \alpha^2 p^2 \vert v \vert^2 \ap{v}^{2p-4}
    \\
    + \alpha' \ap{v}^p - \alpha p \ap{v}^{p-2} v \cdot  (F +
    H*f^{n-1}_t) \big]  e^{\alpha \ap{v}^p} df^n_t(x,v).
\end{align*}
But, by \eqref{eq:F-one-sided-growth}, \eqref{eq:estH} applied to
$f^{n-1}_t$, and the bound on the moment of order $2$ in Step $1$,
$$
  -  v \cdot  (F +   H*f^{n-1}_t)
  \leq
  C \left(1+|v|^2
  + \int_{\R^{2d}} \vert w \vert^2 df^{n-1}_t(y,w) \right)
  \leq C \ap{v}^2
$$
uniformly on $n \geq 1$ and $t \in [0,T]$, so
\begin{align*}
  \frac{d}{dt} \int_{\R^{2d}} &\,e^{\alpha(t) \ap{v}^p} df^{n}_t(x,v)
  \\
  &\leq
  \int_{\R^{2d}} \big[ C \alpha  \ap{v}^{p-2}  + C  \alpha^2   \ap{v}^{2p-2} +  \alpha '(t) \ap{v}^p + C \alpha \ap{v}^{p}  \big]  e^{\alpha \ap{v}^p}
  df^n_t(x,v)\\
  &\leq \int_{\R^{2d}} \big[ C \alpha + C \alpha^2 + \alpha' \big] \ap{v}^p e^{\alpha \ap{v}^p} df^n_t(x,v).
\end{align*}
since $p \leq 2$. Choosing $\alpha$ such that $C \alpha + C \alpha^2 +
\alpha' \leq 0$ (for instance, $\alpha(t) = M e^{-2 C t}$, with $0 < M
\leq 1$) we conclude that
$$
\frac{d}{dt} \int_{\R^{2d}} e^{\alpha(t) \ap{v}^p} df^n_t \leq 0 \,
$$
Hence, we obtain
$$
  \int_{\R^{2d}} e^{\alpha(t) \ap{v}^p} df^n_t(x,v)
  \leq
  \int_{\R^{2d}} e^{\alpha(0) \ap{v}^p} df^n_0(x,v) = \int_{\R^{2d}} e^{\alpha(0) \ap{v}^p}
  df_0(x,v),
$$
which is finite provided $\alpha(0) \leq a$, which can be satisfied by
taking $M = \min\{a,1\}$ above. Then, taking $b = \alpha(T)$ we
conclude that
\begin{equation}
  \label{uniforminnbounds}
  \sup_{n\geq 0} \sup_{0 \leq t \leq T}   \int_{\R^{2d}} e^{b \vert v \vert^p}
  df^n_t <\infty.
\end{equation}
\end{proof}

We notice for later use that as a direct consequence of
\eqref{uniforminnbounds} using the Markov inequality, there exists $C
\geq 0$ such that
\begin{equation}\label{uniftailinnbounds}
 \sup_{0 \leq t \leq T} \sup_{n\geq 0} \E \left[ \1_{| V^n_t| > R} \right]
  \leq e^{-b R^p} \sup_{0 \leq t \leq T} \sup_{n\geq 0} \E \left[ e^{b | V^n_t |^p}\right] \leq C \, e^{-b \,
  R^p}\, .
\end{equation}

\paragraph{Step 3.- Existence for the nonlinear PDE.} We intend to carry out
an argument analogous to the one for the existence and uniqueness
of solutions for the 2D Euler equation in fluid mechanics, found
for example in \cite{Marchioro-Pulvirenti}. We will prove that the
$f_t^n$ converge to a limit, and that this limit is a solution to
the nonlinear PDE. To simplify notation we drop time subscripts
and use the following shortcuts:
$$
    v^{n} := V^{n+1} - V^n,
    \quad
    x^{n} := X^{n+1} - X^n,
    \quad
    Z^n := (X^n, V^n),
    \quad
    z := (x,v).
$$
Also, we write
$$
    \gamma^n(t) := \E\left[|x^n|^2\right] + \E\left[|v^n|^2\right]
    = \E \left[|Z^{n+1} - Z^n|^2\right].
$$
We compute, by It\^o's formula, and for any $n \geq 1$,
\begin{equation}
    \label{eq:ex0.5}
    \frac{d}{dt} \E\left[|x^n|^2\right]
    = 2\, \E \left[ x^n \cdot v^n \right]
    \leq
    \E \left[|x^n|^2\right] + \E\left[|v^n|^2\right] = \gamma^n(t)\,,
\end{equation}
with
\begin{align}
    \frac{1}{2} \frac{d}{dt} \E\left[|v^n|^2\right]
    =&\,
    - \E \left[ v^n \cdot \big(F(Z^{n+1}) - F(Z^n)\big)  \right]
    \nonumber\\
    &-
    \E \left[ v^n \cdot \Big((H*f^n_t)(Z^{n+1}) - (H*f^{n-1}_t)(Z^{n})\Big) \right]
    =: T_1 + T_2.    \label{eq:ex1}
\end{align}

{\it Estimate for $T_1$.} We decompose the term $T_1$ as
\begin{align*}
T_1 =&\,  -\E \left[(V^{n+1}  \!  -  \! V^n) \cdot \big(F(X^{n+1},
V^{n+1}) - F(X^{n+1}, V^n) \big)  \right]\\
  &\,-
  \E \left[ (V^{n+1}  \!  -  \! V^n) \cdot \big(F(X^{n+1}, V^n) - F(X^n, V^n) \big) \right],
\end{align*}
which by \eqref{eq:F-loc-Lipschitz-v}--\eqref{eq:F-loc-Lipschitz-x} is
bounded above by
\begin{align*}
    T_1&\leq L \, \E\left[ | v^n |^2(1 + |V^n|^p+ |V^{n+1}|^p)\right]
    + L \, \E  \left[| v^n | \, | x^n | \,  (1 + | V^n
    |^p)\right]\nonumber \\
    &=: T_{11} + T_{12}.
\end{align*}
Given $R>0$, we bound $T_{11}$ as follows:
\begin{align*}
    T_{11}
    \leq&\,
     L (1+2R^p) \,\E\left[  |v^n|^2\right]
   +
    L \, \E \left[| v^n |^2 |V^n|^p \1_{| V^n | > R}\right]
+
    L \, \E \left[| v^n |^2 |V^{n+1}|^p \1_{| V^{n+1} | > R}\right]
    \\
    \leq&\,
    L\, (1+2 R^p) \gamma^n(t)
    +
    L \, \E\left [ | v^n |^4 |V^n|^{2p} \right]^{1/2}
    \,\E \left[\1_{| V^n | > R} \right]^{1/2}\\
&\, +
    L \, \E\left [ | v^n |^4 |V^{n+1}|^{2p} \right]^{1/2}
    \,\E \left[\1_{| V^{n+1} | > R} \right]^{1/2}
    \\
    \leq&\,
    C\, (1+R^p) \gamma^n(t)
    +
    C \,\E \left[\1_{| V^n | > R} \right]^{1/2}+
    C \,\E \left[\1_{| V^{n+1} | > R} \right]^{1/2},
\end{align*}
where we have used the uniform-in-$n$ bound on moments of $f^n$
obtained in \eqref{uniforminnbounds}. For the term $T_{12}$, we
get
\begin{align*}
    T_{12}
    &\leq
     L (1+R^p)\, \E \left[|v^n||x^n|\right]
    + L \, \E \left[ | v^n | \, | x^n | \,   | V^n |^p \1_{| V^n | > R}\right]
    \\
   &\leq
    \frac{L}{2} \, (1+R^p) \gamma^n(t)
    + \frac{L}{2}  \, \E \left[ | x^n |^2\right]
    + \frac{L}{2}  \, \E \left[ | v^n |^2 \,  | V^n |^{2p} \1_{| V^n | > R}\right]
    \\
    &\leq
    L\, (1+R^p) \gamma^n(t)
    + \frac{L}{2}  \,
    \E \left[ | v^n |^4 \,  | V^n |^{4p} \right]^{1/2}
    \E \left[\1_{| V^n | > R} \right]^{1/2}
    \\
   &\leq
    L\, (1+R^p) \gamma^n(t)
    +
    C \, \E \left[\1_{| V^n | > R} \right]^{1/2},
\end{align*}
using again the bound on moments of $f^n$ in
\eqref{uniforminnbounds}. Finally, using
\eqref{uniftailinnbounds}, there exist constants $b$ and $C$ such
that for all $0\leq t \leq T$
  \begin{equation}
    \label{boundT_1}
    T_1 \leq C (1+R^p) \, \gamma^n (t) + C \, e^{- b R^p}
  \end{equation}
for all $n$ and  $R>0$.

{\it Estimate for $T_2$.} On the other hand, for $T_2$,
\begin{align}
    T_2
    & = \!- \!\E \left[v^n \cdot \Big((H*f^n_t)(Z^{n+1}) - (H*f^{n}_t)(Z^{n})\Big)
     \right]\!-\!
    \E \left[ v^n \cdot \Big( H*(f^n_t - f^{n-1}_t)(Z^{n}) \Big)
   \right] \nonumber\\
    &=: T_{21} + T_{22}.    \label{eq:ex10}
\end{align}
For the first term $T_{21}$, we proceed analogously to the
estimates of $T_{11}$ and $T_{12}$ to obtain
\begin{align}
    T_{21} =&\,
    - \E \left[v^n \cdot \int_{\R^{2d}} (H(Z^{n+1}-z) - H(Z^{n}-z))
    f^n_t(x,v) \,dx\,dv\right]
    \nonumber\\
    \leq&\,L\,
    \E \left[|v^n| \int_{\R^{2d}} |Z^{n+1} - Z^{n}|(1 + |V^n|^p + |V^{n+1}|^p + |v|^p)
    f^n_t(x,v) \,dx\,dv\right]
    \nonumber\\
    \leq&\,
    C\,
    \E \left[ |v^n| |Z^{n+1} - Z^{n}| \left(1 + |V^n|^p + |V^{n+1}|^p
    \right)\right] \nonumber\\
    \leq&\, C (1+R^p) \, \gamma^n (t) + C \, e^{- b R^p}
    \label{eq:ex11}
\end{align}
where the last steps where not detailed since they are very similar to
the estimates of $T_{11}$ and $T_{12}$, and the uniform moment bounds
\eqref{uniforminnbounds} and \eqref{eq:H-loc-Lipschitz} were used.
Now, for $T_{22}$, observe that, taking $g^{n} := \law ((X^{n},
V^n,X^{n-1}, V^{n-1}))$, we can write the following identity
$$
A:=H*(f^n_t - f^{n-1}_t)(Z^{n}) \! = \!\int_{\R^{4d}}
\!\!\big(H(X^n - x, V^n - v) - H(X^n - y, V^n - w)\big) \,dg^{n}
$$
where we used the shortcut notation $dg^{n}$ for the measure
$dg^n(x,v,y,w)$. By the Cauchy-Schwarz inequality and the uniform
moment bounds \eqref{uniforminnbounds}, we get
\begin{align*}
    |A| \leq&\,
    L \int_{\R^{4d}} (|x-y|+|v-w|)
    (1 + |V^n|^p + |v|^p + |w|^p) \,dg^{n}(x,v,y,w)
    \\
    \leq&\,
    L (1 + |V^n|^p) \,\E \left[|x^{n-1}| + |v^{n-1}|\right]
    \\
    &\,+
    L \left(
      \int_{\R^{4d}} (|x-y|+|v-w|)^2 \,dg^{n}\right)^{1/2}
    \left(
      \int_{\R^{4d}} (|v|^p + |w|^p)^2 \,dg^{n}
    \right)^{1/2}
    \\
    \leq&\,
    C (1 + |V^n|^p) \E \left[|x^{n-1}| + |v^{n-1}|\right]
    +
    C \left( \E \left[ |Z^{n} - Z^{n-1}|^2 \right] \right) ^{1/2}
    \\
    \leq&\,
    C (1 + |V^n|^p) \, \gamma^{n-1}(t)^{1/2}\, .
\end{align*}
Using the expression of $T_{22}$, we deduce
\begin{align}
    T_{22} \leq &\, \E \left[ |v^n| | H*(f^n_t - f^{n-1}_t)(Z^{n}) |\right]
    \leq
    C \gamma^{n-1}(t)^{1/2} \, \E \left[ |v^n| (1 + |V^n|^p)\right]
    \nonumber\\
    \leq&\,
    C \gamma^{n-1}(t)+
    \E\left[ |v^n|^2\right] \E\left[ |V^n|^{2p} \right]
    \leq
    C \gamma^{n-1}(t)+
    C \gamma^n(t)\,,     \label{eq:ex12}
\end{align}
again by the Cauchy-Schwarz inequality and the uniform moment bounds
\eqref{uniforminnbounds}.

Hence, putting \eqref{boundT_1}, \eqref{eq:ex10}, \eqref{eq:ex11}
and  \eqref{eq:ex12} in \eqref{eq:ex0.5} and \eqref{eq:ex1}, we
conclude
\begin{equation*}
    \frac{d}{dt} \gamma^n(t)
    \leq
    C(1+R^p) \gamma^n(t)
    +
    C \gamma^{n-1}(t)
    +
    C e^{-b R^p}.
\end{equation*}

\bigskip

{\it Induction Argument.} Taking $R > 1$ we may write that
\begin{equation}
    \label{eq:ex14}
    \frac{d}{dt} \gamma^n(t)
    \leq
    C \left( r \gamma^n(t)
    +
    \gamma^{n-1}(t)
    +
    e^{-r}
    \right),
\end{equation}
for some other constant $C > 0$ and for any all $r > 1$.
Gronwall's Lemma then proves that
$$
    \gamma^n(t)
    \leq
    C \int_0^t e^{Cr (t-s)} \gamma^{n-1}(s)\,ds
    + C e^{-r} t e^{Crt},
$$
and iterating this inequality gives
\begin{align*}
      \gamma^n(t)
    &\leq
    C^n \int_0^t e^{Cr (t-s)} \gamma^{0}(s)
    \frac{(t-s)^{n-1}}{(n-1)!} \,ds
    +
    C t e^{-r} e^{Crt} \sum_{i=0}^{n-1} \frac{C^i t^i}{(i+1)!}
    \\
    &\leq
    C^n e^{Crt} \frac{t^{n-1}}{(n-1)!} \int_0^t \gamma^{0}(s)
     \,ds
    +
    C t e^{-r} e^{Crt} e^{Ct}
    \\
    &\leq
    C^n e^{Crt} t^{n} \sup_{s \in [0,t]} \gamma^{0}(s)
    +
    C t e^{Ct} e^{r(Ct-1)}.
\end{align*}
Taking $r = n$, we obtain
$$
    \gamma^n(t)
    \\
    \leq
    \exp\left(
      n(\ln (Ct) + Ct)
    \right)
    \sup_{s \in [0,t]} \gamma^{0}(s)
    +
    C t e^{Ct} e^{n (Ct - 1)}.
$$
Choosing $0<T_* <T $ small enough such that $\lambda := \max\{CT_*
- 1, \ln(CT_*) + CT_*\} < 0$, then
  \begin{equation}
    \label{eq:ex18}
    \sup_{t \in [0,T_*]} \gamma^n(t)
    \\
    \leq e^{\lambda n}
    \left(
      \sup_{s \in [0,T_*]} \gamma^{0}(s)
      +
      C T e^{CT}
    \right).
\end{equation}
Since by definition $W_2(f^{n+1}_t, f^n_t)^2 \leq \E|Z^{n+1}_t -
Z^n_t|^2 = \gamma^n(t)$, we conclude that the sequence of curves $\{t
\in [0,T_*] \mapsto f^n_t\}_{n \geq 0}$ is a Cauchy sequence in the
metric space $C([0,T_*], \mathcal{P}_2(\R^{2d}))$ equipped with the
distance
\begin{equation*}
    \mathcal{W}_2(f,g) := \sup_{t \in [0,T_*]} W_2(f_t,g_t).
\end{equation*}
By completeness of this space, we define $f \in C([0,T_*],
\mathcal{P}_2(\R^{2d}))$ by $f_t := \lim_{n \to +\infty} f^n_t$ for $t
\in [0,T_*]$.

This convergence and the uniform moment bounds on $f^n$ in
\eqref{uniforminnbounds} allow us to pass to the limit
in~\eqref{eq:fn-PDE-weak}. Let us point out how to deal with the
nonlinear term in the equation: observe first that for fixed $s$, it
is given by
\begin{equation}\label{eq:nltermfn}
\int_{\rr^{2d}} \nabla_v \varphi_s \cdot
  H * f^{n-1}_s \, df^n_s
  =
 \int_{\rr^{4d}} \nabla_v \varphi_s (x,v)  \cdot
  H(x-y, v-w)  df^{n-1}_s (y,w) \, df^n_s (x,v).
\end{equation}

But on the one hand $f_s^{n-1}$ and $f_s^{n}$ converge to $f_s$ for
the $W_2$ topology, hence so does $f_s^{n-1} \otimes f_s^{n}$ to $f_s
\otimes f_s$ (in $\rr^{4d}$). On the other hand
$$
\vert \nabla_v \varphi_s (x,v)  \cdot  H(x-y, v-w) \vert
\leq \Vert \nabla_v \varphi_s \Vert_{L^{\infty}} (1 + \vert v \vert + \vert w \vert)
$$
by~\eqref{eq:H-two-sided-sys}. Therefore~\eqref{eq:nltermfn}
converges to
$$
 \int_{\rr^{4d}} \nabla_v \varphi_s (x,v)  \cdot
  H(x-y, v-w)  df_s (y,w) \, df_s (x,v)
=
\int_{\rr^{2d}} \nabla_v \varphi_s \cdot
  H * f_s \, df_s
  $$
  for all $s$. Uniform-in-$s$ bounds finally allow to pass to the limit in the integral in $s$.

  With this, we have shown that $f_t$ is a solution on $[0,T_*]$ of
  the nonlinear PDE \eqref{eq:pde}. Now, one can extend the solution
  to the whole interval $[0,T]$ by iterating this procedure,
  starting from $T_*$. This can be done since the additional time
  $T_*'$ for which we can extend a solution starting at $T_*$ depends
  only on moment bounds on $f_{T_*}$, for which we have the bound
  \eqref{uniforminnbounds}, valid up to $T$.

\paragraph{Step 4.- Existence for the nonlinear SDE:} Now we use $f_t$ to
define the process $(X_t,V_t)$ by
\begin{equation}
    \label{eq:XV-from-f}
    \left\{
    \begin{split}
      &dX_t=V_t\,dt
      \\
      &dV_t =
      \sqrt{2}\, dB_t - F(X_t, V_t)\,dt
      -  (H * f_t) (X_t,V_t)\,dt,
      \\
      &(X_0,V_0) = (X^0, V^0)
    \end{split}
    \right.
\end{equation}
thanks to Lemma~~\ref{lem:edslin}. Observe that for all $t$, $f_t$
is the $W_2$-limit of $f^n_t$ and $p \leq 2$, so that
$$
\int_{\rr^{2d}} \vert v \vert^{p} df_t(x,v)
=
 \int_{\rr^{2d}} \vert v \vert^{p}
  df^n_t(x,v)\leq C,
$$
uniformly in $t \in [0,T]$ since the $f^n_t$ have second moments
bounded according to Lemma~\ref{lem:momscheme}. If $g_t$ is the
law of $(X_t,V_t)$, then, as in section~\ref{sec:linearPDE}, $g_t$
is a weak solution of the linear PDE
  \begin{equation*}
    \partial_t g_t + v \cdot \nabla_x g_t  = \Delta_v g_t + \nabla_v( (F + H*f_t) g_t).
  \end{equation*}
Of course, $f_t$ is also a solution of the same linear PDE; by
uniqueness of solutions to this linear PDE (see again section~\ref{sec:linearPDE}), we deduce that $f_t = g_t$, and hence $(X_t,V_t)$ is a solution to the nonlinear SDE
\eqref{eq:nlSDE} on $[0,T]$.

\paragraph{Step 5.- Uniqueness for the nonlinear PDE~\eqref{eq:pde}:}
Now, take two solutions $f^1$, $f^2$ of the nonlinear PDE, and define
the processes $(X^1_t, V^1_t)$ and $(X^2_t, V^2_t)$ by
\eqref{eq:XV-from-f}, putting $f^1$ and $f^2$ in the place of $f$,
respectively. As the law of $(X^i_t, V^i_t)$ ($i = 1,2$) solves the
linear PDE \eqref{eq:linearPDE} with $f_t^i$ instead of $g_t$, so this
law must in fact be $f_t^i$ by uniqueness of the linear PDE. Then, if
we follow the same calculation we did in step 3 above, we obtain the
following instead of \eqref{eq:ex14}:
\begin{equation}
    \label{eq:u1}
    \frac{d}{dt} \gamma(t)
    \leq
    C r \gamma(t)
    +
    C\, e^{-r},
\end{equation}
for a constant $C$ and any $r \geq 1$, with $\gamma(t) := \E
\left[|X^1_t - X^2_t|^2\right] + \E \left[|V^1_t -
V^2_t|^2\right]$. For the above to be valid, we need a bound on
exponential moments of $f^1_t$ and $f^2_t$. This estimate can be
obtained in a similar way as in Lemma \ref{lem:momscheme} and
therefore, we omit the proof:

\begin{lem}
  \label{lem:mompde}
  Assume hypotheses
  \eqref{eq:F-one-sided-growth}--\eqref{eq:H-loc-Lipschitz} on $F$ and
  $H$. Let $(f_t)_{t \geq0}$ be a solution to \eqref{eq:pde} with
  initial datum a probability measure $f_0$ on $\rr^{2d}$ such that
$$
    \int_{\rr^{2d}} \big( | x |^2 + e^{a |v|^p}\big)  f_0(x,v) \, dx \, dv < + \infty
$$
for a positive $a$. Then for all $T$ there exists $b > 0$ which
depends only on $f_0$ and $T$, such that
$$
    \sup_{0 \leq t \leq T}
    \int_{\rr^{2d}} \big( | x |^2 + e^{b |v|^p} \big)  f_t(x,v) \, dx \, dv
    < + \infty.
$$
\end{lem}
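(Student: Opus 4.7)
The plan is to mimic the two-step argument in the proof of Lemma~\ref{lem:momscheme}, but applied directly to the solution $f_t$ of the nonlinear PDE~\eqref{eq:pde} instead of the linearized iterates $f^n_t$. The only difference is that the self-interaction term $H*f_t$ must now be controlled by the same moments of $f_t$ that we are trying to estimate, which turns the linear \emph{a priori} bounds into nonlinear ones, but the closure is elementary thanks to the Cauchy--Schwarz inequality and the $p\le 2$ assumption.

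\textbf{Step 1: second moments.} Let $e(t):=\int_{\rr^{2d}} |v|^2\, df_t$ and $m(t):=\int_{\rr^{2d}} |x|^2\, df_t$. Testing~\eqref{eq:PDE-weak} against (a smooth truncation of) $\varphi(x,v)=|v|^2$ and passing to the limit, and using~\eqref{eq:F-one-sided-growth} together with the bound
\[
|v\cdot (H*f_t)(x,v)|\le C\Bigl(1+|v|^2+\!\int_{\rr^{2d}}\!|w|^2\, df_t(y,w)\Bigr)
\]
coming from~\eqref{eq:H-two-sided-sys} as in~\eqref{eq:estH}, I get
\[
e'(t)\le 2d+C\bigl(1+e(t)\bigr),
\]
which is now a closed linear ODE; Gr\"onwall's lemma yields a bound $e(t)\le K(T)$ on $[0,T]$. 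The bound on $m(t)$ follows from $m'(t)=2\int x\cdot v\, df_t\le m(t)+e(t)$.

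\textbf{Step 2: exponential moments.} Set $\langle v\rangle=(1+|v|^2)^{1/2}$, choose a smooth positive weight $\alpha(t)$, and compute $\frac{d}{dt}\int_{\rr^{2d}} e^{\alpha(t)\langle v\rangle^p}\, df_t$ by testing the weak formulation of~\eqref{eq:pde} against a truncation of $\varphi_t(x,v)=e^{\alpha(t)\langle v\rangle^p}$. Exactly as in the second step of the proof of Lemma~\ref{lem:momscheme}, the time derivative contributes an $\alpha'\langle v\rangle^p$ term, the Laplacian $\Delta_v$ contributes $d\alpha p\langle v\rangle^{p-2}+\alpha p(p-2)|v|^2\langle v\rangle^{p-4}+\alpha^2 p^2|v|^2\langle v\rangle^{2p-4}$, and the transport term $-\nabla_v\cdot((F+H*f_t)f_t)$ contributes $-\alpha p\langle v\rangle^{p-2}v\cdot(F+H*f_t)$. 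Using~\eqref{eq:F-one-sided-growth}, \eqref{eq:estH} applied to $f_t$, and the uniform bound on $e(t)$ from Step~1, the last factor is controlled by $C\langle v\rangle^2$, so that
\[
\frac{d}{dt}\int_{\rr^{2d}} e^{\alpha(t)\langle v\rangle^p}\, df_t\le \int_{\rr^{2d}}\bigl[C\alpha+C\alpha^2+\alpha'\bigr]\langle v\rangle^p e^{\alpha\langle v\rangle^p}\, df_t,
\]
where the use of $p\le 2$ is crucial to absorb the $\langle v\rangle^{2p-2}$ factor into $\langle v\rangle^p$. Choosing, e.g., $\alpha(t)=Me^{-2Ct}$ with $M=\min\{a,1\}$ makes the bracket nonpositive, so the exponential moment is nonincreasing and bounded by $\int e^{\alpha(0)\langle v\rangle^p}\, df_0<\infty$. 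Taking $b=\alpha(T)$ concludes the proof.

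\textbf{Main obstacle.} The honestly routine part is the ODE analysis; the only genuine subtlety is the justification of the weak formulation~\eqref{eq:PDE-weak} for the unbounded test functions $|v|^2$ and $e^{\alpha(t)\langle v\rangle^p}$. The standard fix is to work with smooth cutoffs $\chi_R(v)\,|v|^2$ and $\chi_R(v)\,e^{\alpha\langle v\rangle^p}$ where $\chi_R$ is a smooth radial cutoff supported in $|v|\le 2R$ and equal to $1$ on $|v|\le R$, derive the estimates above with error terms supported on $\{|v|\ge R\}$, and let $R\to\infty$ using the fact that $f_t$ has at least a finite second moment (from Step~1) and a finite initial exponential moment that is transported by the dissipative structure of the equation.
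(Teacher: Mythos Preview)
Your proposal is correct and is precisely the approach the paper intends: it states that the estimate ``can be obtained in a similar way as in Lemma~\ref{lem:momscheme}'' and omits the proof, and you have faithfully carried out that two-step argument (second moments by Gr\"onwall, then exponential moments via the decaying weight $\alpha(t)$), with the only change being that the $e_{n-1}(t)$ term in the iterative scheme collapses to $e(t)$ itself, closing the inequality directly. Your remark on justifying the use of unbounded test functions via cutoffs is a welcome addition that the paper leaves implicit.
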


\

\noindent Observe now that
$$
 \gamma(0)  = \E \left [|X^1_0 - X^2_0|^2 + |V^1_0 - V^2_0|^2\right] = 0
$$
since $X^1_0 = X^2_0 = X^0$, and similarly for $V$.

Assume now that $\gamma$ is non identically $0$. Then, with the same
argument as in step 3 of the proof of Theorem \ref{thm:main}, whenever
$0 < \gamma (t) < 1/e$ we can choose $r := -\ln \gamma(t)$ in
\eqref{eq:u1} to obtain
\begin{equation}
  \label{eq:difineq}
    \frac{d}{dt} \gamma(t)
    \leq
    - C \gamma(t) \ln \gamma(t)
    +
     C \gamma(t)
 \leq -2 C \gamma(t) \ln \gamma(t).
\end{equation}
If $\gamma(t) = 0$ at some point, then one can see that $\frac{d}{dt}
\gamma(t) \leq 0$ by letting $r \to +\infty$ in \eqref{eq:u1}, and in
that case the inequality \eqref{eq:difineq} holds trivially (again
setting $z \ln z = 0$ at $z=0$ by continuity). Hence,
\eqref{eq:difineq} holds as long as $0 \leq \gamma(t) < 1/e$. By
Gronwall's Lemma, this implies that $\gamma(t) = 0$ for $t \in [0,T]$
and shows that $f^1_t$ and $f^2_t$ coincide, proving that solutions to
the nonlinear PDE are unique.

\paragraph{Step 6.- Uniqueness for the nonlinear SDE~\eqref{eq:nlSDE}:}
Take two pairs of stochastic processes $(X^1_t, V^1_t)$ and $(X^2_t,
V^2_t)$ which are solutions to the nonlinear SDE
\eqref{eq:nlSDE}. Then, their laws are solutions to the nonlinear PDE
\eqref{eq:pde}, and by the previous step we know that they must be the
same. If we call $f_t$ their common law, then both $(X^1_t, V^1_t)$
and $(X^2_t, V^2_t)$ are solutions to the linear SDE \eqref{eq:edslin}
with $f_t$ instead of $g_t$, and by uniqueness of this linear SDE (see
section~\ref{sec:linSDE}), they must coincide.

\bigskip

\noindent This concludes the proof of
Theorem~\ref{thm:mainexist}.

\bigskip
{\footnotesize \noindent\textit{Acknowledgments.} The last two
authors acknowledge support from the project MTM2008-06349-C03-03
DGI-MCI (Spain) and 2009-SGR-345 from AGAUR-Generalitat de
Catalunya. All authors were partially supported by the
ANR-08-BLAN-0333-01 Projet CBDif-Fr. This work was initiated while the first author was visiting UAB; it is a pleasure for him to thank this institution for its kind hospitality.
}

\end{document}